\begin{document}

\newcommand{\E}{\mathbb{E}}
\newcommand{\PP}{\mathbb{P}}
\newcommand{\CP}{\mathcal{P}}
\newcommand{\CU}{\mathcal{U}}
\newcommand{\CW}{\mathcal{W}}
\newcommand{\CK}{\mathcal{K}}
\newcommand{\RR}{\mathbb{R}}
\newcommand{\LL}{\mathbb{L}}
\newcommand{\HH}{\mathbb{H}}
\newcommand{\CL}{\mathcal{L}}
\newcommand{\NN}{\mathbb{N}}
\newcommand{\CC}{\mathcal{C}}
\newcommand{\BB}{\mathcal{B}}
\newcommand{\SM}{\mathbb{S}}
\newcommand{\CM}{\mathcal{M}}
\newcommand{\II}{\mathbf{1}}
\newcommand{\hX}{\hat X}
\newcommand{\bq}{\bar q}
\newcommand{\bV}{\bar V}

\newtheorem{thm}{Theorem}[section]
\newtheorem{prop}[thm]{Proposition}
\newtheorem{assp}[thm]{Assumption}
\newtheorem{lem}[thm]{Lemma}
\newtheorem{rem}[thm]{Remark}
\newtheorem{cor}[thm]{Corollary}
\newtheorem{De}[thm]{Definition}
\newtheorem{expl}[thm]{Example}
\newtheorem{ca}[thm]{Case}
\newcommand{\thmref}[1]{Theorem~{\rm \ref{#1}}}
\newcommand{\lemref}[1]{Lemma~{\rm \ref{#1}}}
\newcommand{\corref}[1]{Corollary~{\rm \ref{#1}}}
\newcommand{\propref}[1]{Proposition~{\rm \ref{#1}}}
\newcommand{\defref}[1]{Definition~{\rm \ref{#1}}}
\newcommand{\remref}[1]{Remark~{\rm \ref{#1}}}
\newcommand{\exmref}[1]{Example~{\rm \ref{#1}}}
\newcommand{\lelemref}[1]{Lemma~{\bf \ref{#1}}}

\newcommand\tq{{\scriptstyle{3\over 4 }\scriptstyle}}
\newcommand\qua{{\scriptstyle{1\over 4 }\scriptstyle}}
\newcommand\hf{{\textstyle{1\over 2 }\displaystyle}}
\newcommand\hhf{{\scriptstyle{1\over 2 }\scriptstyle}}

\newcommand{\proof}{\noindent {\it Proof}. }
\newcommand{\eproof}{\hfill $\Box$} 

\def\a{\alpha} \def\g{\gamma} \def\nn{\nonumber}
\def\e{\varepsilon} \def\z{\zeta} \def\y{\eta} \def\o{\theta}
\def\vo{\vartheta} \def\k{\kappa} \def\lbd{\lambda} \def\m{\mu} \def\n{\nu}
\def\x{\xi}  \def\r{\rho} \def\s{\sigma}
\def\p{\phi} \def\f{\varphi}   \def\w{\omega}
\def\q{\surd} \def\i{\bot} \def\h{\forall} \def\j{\emptyset}

\def\be{\beta} \def\de{\delta} \def\up{\upsilon} \def\eq{\equiv}
\def\ve{\vee} \def\we{\wedge}

\def\d{\mathrm{d}}
\def\F{{\cal F}}
\def\T{\tau} \def\G{\Gamma}  \def\D{\Delta} \def\O{\Theta} \def\L{\Lambda}
\def\X{\Xi} \def\S{\Sigma} \def\W{\Omega}
\def\M{\partial} \def\N{\nabla} \def\Ex{\exists} \def\K{\times}
\def\V{\bigvee} \def\U{\bigwedge}

\def\1{\oslash} \def\2{\oplus} \def\3{\otimes} \def\4{\ominus}
\def\5{\circ} \def\6{\odot} \def\7{\backslash} \def\8{\infty}
\def\9{\bigcap} \def\0{\bigcup} \def\+{\pm} \def\-{\mp}
\def\la{\langle} \def\ra{\rangle}

\def\tl{\tilde}
\def\trace{\hbox{\rm trace}}
\def\diag{\hbox{\rm diag}}
\def\for{\quad\hbox{for }}
\def\refer{\hangindent=0.3in\hangafter=1}

\newcommand\wD{\widehat{\D}}

\title{
\bf
The ergodicity of nonlinear McKean-Vlasov stochastic differential equations with common noise
 }

\author{
{\bf
Xing Chen ${}^{1}$,  
Xiaoyue Li ${}^{1}$,  
Chenggui Yuan ${}^{2}$
 }
\\
${}^1$ School of Mathematical Sciences,\\ Tiangong University, Tianjin, 300387, China.\\
${}^2$ Department of Mathematics,\\ Swansea University, Bay Campus, Swansea, SA1 8EN, UK.
}

\date{}

\maketitle

\begin{abstract}
This paper focuses on the ergodicity of McKean-Vlasov (MV) stochastic differential equations  (SDEs) with common noise (wCN), where coefficients depend on both the state and the measure. A major challenge in this setting is that the underlying Markov operator loses the semigroup property, precluding standard ergodic analyses. To circumvent this issue, we lift the system by considering the joint flow of the solution and its conditional distribution. We first construct a semigroup associated with the measure pair of the solution and its conditional distribution. Under polynomial growth conditions, we prove the existence and uniqueness of the invariant measure for the lifted system by a coupling method and obtain an explicit exponential convergence rate. We subsequently derive the strong law of large numbers by a decoupling approach. Building on these results, we establish the uniform-in-time propagation of chaos for the associated mean-field interacting particle system. Furthermore, we establish the convergence of the distribution of a single particle and the empirical measure of the particle system to the marginals of the invariant measure. Finally, illustrative examples are provided to verify the theoretical findings.

\medskip \noindent
{\small\bf Key words: }  McKean-Vlasov stochastic differential equations, Common noise, Particle system, Invariant measure, Uniform-in-time propagation of chaos.

\end{abstract}

\section{Introduction}\label{sec:intr}
McKean-Vlasov (MV) stochastic differential equations  (SDEs) with common noise (wCN) describe an important class of stochastic systems whose dynamics depend on both the microscopic state and the macroscopic conditional distribution. Specifically, we consider the following equation:
\begin{align}\label{eq1.1} \mathrm{d}X_t=f(X_t,\mathcal{L}^1(X_t))\mathrm{d}t+g(X_t,\mathcal{L}^1(X_t))\mathrm{d}B_t+
	g^0(X_t,\mathcal{L}^1(X_t))\mathrm{d}B^0_t, \quad t\geq 0.
\end{align}
where $f:\mathbb{R}^d\times \mathcal{P}(\mathbb{R}^d)\rightarrow \mathbb{R}^d$, $g:\mathbb{R}^d\times \mathcal{P}(\mathbb{R}^d)\rightarrow \mathbb{R}^{d\times d}$ and $g^0: \mathbb{R}^d\times \mathcal{P}(\mathbb{R}^d)\rightarrow \mathbb{R}^{d\times d}$ are Borel measurable functions, $(B_t)_{t\geq0}$ and $(B_t^0)_{t\geq0}$ are two independent $d$-dimensional Brownian motions with $(B_t^0)_{t\geq0}$ representing the common noise. Here for any $t\geq 0$, $\mathcal{L}^1(X_t)$ denotes the conditional distribution of $X_t$ given $\mathcal{F}^0_t$ (see Section \ref{Sect.1} for the details). When $g^0\equiv0$, \eqref{eq1.1} reduces to the classical MV-SDE. MV-SDEswCN widely serve as fundamental models in mean-field games, mean-field control, finance, quantum mechanics and quantum chemistry (see, e.g., \cite{BLY2022,BLM2023,CG2019,DKL2014,D1975,DLW2024,HSS2021a,HSS2021,M1966}).  
Compared with MV-SDEs, the stochastic nature of conditional distributions brings new challenges to the ergodic analysis of the MV-SDEswCN \eqref{eq1.1}. This paper aims to establish the existence of the unique invariant measure and the strong law of large numbers for the MV-SDEwCN \eqref{eq1.1}, alongside the uniform-in-time propagation of chaos.

The existence of the unique invariant measure is a crucial dynamical property of SDEs that lays the foundation for ergodicity. Intuitively, the ergodicity reveals that the average of the system  in time tends to the spatial average (see \cite{DZ1996}), a property that is highly beneficial to swap time and space.
This characteristic has a wide range of applications in statistical inference and associated long-term behaviour connected to metastability (see, e.g., \cite{ABG2012,BPP2023,LMW2021,T2013}). 
Thus, the ergodicity of the MV-SDEswCN \eqref{eq1.1} have attracted much attention. However, owing to the stochastic nature of conditional distributions, the solution $X_t$ to the MV-SDEwCN \eqref{eq1.1} lacks the Markov property. Consequently, existing literature has primarily focused on the invariant measure of the conditional distribution $\mathcal{L}^1(X_t)$ under restrictive coefficient assumptions. For instance, assuming constant diffusion coefficients  $g$ and $g^0$ alongside a convolution-type drift, \cite{M} proved the existence and uniqueness of such an invariant measure. Similar results were obtained by \cite{DTM} for conditional expectation-type drifts with constant diffusions. More recently, \cite{BW} established the exponential contractivity of the conditional distribution when 
$g^0$ is constant and $g$ depends solely on the state. For extensions to jump-diffusion settings, we refer to \cite{BLW}.  Notably, Wang \cite{W2021} demonstrated the Markov property of the solution couple $(X_t, \mathcal{L}^1(X_t))$ for the MV-SDEwCN \eqref{eq1.1} with $g\equiv0$, and subsequently proved its exponential ergodicity. Motivated by this research direction, our primary objective is to establish the existence and uniqueness of the invariant measure and the exponential ergodicity for the solution couple $(X_t, \mathcal{L}^1(X_t))$ for the more general MV-SDEwCN \eqref{eq1.1}, where both the drift and diffusion coefficients depend on the state and the measure. 


Apart from investigating the exponential ergodicity, we are also interested in the probabilistic
limit behaviors, such as the strong law of large numbers(LLN) for the MV-SDEwCN \eqref{eq1.1}. As a fundamental topic in the study of Markov processes, the strong LLN primarily concerns the convergence of time averages as time approaches infinity, and has been extensively studied in the literature for time-homogeneous Markov processes \cite{CDHS2025,KW2012,K2018,S2006}. Specifically, for time-homogeneous Markov processes $\{x_t\}_{t\geq0}$ that exhibit exponential ergodicity with respect to their invariant measure $u^*$, it is well-established that the time average 
$\frac{1}{T}\int_0^T\psi(x_t)\mathrm{d}t$ converges almost surely to the ergodic limit  $\int \psi\mathrm{d}u^*$, i.e.
$$\lim_{T \to \infty} \frac{1}{T} \int_0^T \psi(x_t) \mathrm{d}t = \int  \psi\mathrm{d}u^*\quad \text{a.s.}$$ 
However, because the solution $X_t$ to the MV-SDEwCN \eqref{eq1.1} is not a Markov process, existing theory does not apply directly. To the best of our knowledge, few results exist concerning the strong LLN for this system. Consequently, our second objective is to establish this result.

The conditional propagation of chaos serves as the fundamental bridge between the MV-SDE \eqref{eq1.1} and interacting particle systems (IPS) with common noise (see \cite{CD2018b,CD2022a,CD2022b,S1991}). Precisely, consider the associated IPSwCN:
\begin{align}\label{eq5.1}
\mathrm{d}X^{i,N}_t=f(X^{i,N}_t,u_t^N)\mathrm{d}t+g(X^{i,N}_t,u_t^N)\mathrm{d}B^i_t+g^0(X^{i,N}_t,u_t^N)\mathrm{d}B^{0}_t, \quad i=1,\cdots,N,
\end{align}
where $u_t^N=\frac{1}{N}\sum_{j=1}^N\delta_{X^{j,N}_t}$ represents the empirical measure of the particles, and $(B^i_t)_{i\geq 1}$ is a sequence of independent Brownian motions independent of $B^0_t$. The conditional propagation of chaos asserts that as $N\to\infty$, the particles in the IPSwCN become conditionally independent, and their empirical conditional distributions converge to the conditional law of the solution to the limiting MV-SDE. Most existing literature focuses on the finite-time conditional propagation of chaos (see, e.g., \cite{BSW2023,CD2018b,CF2016,ELL2021,H2025,KNRS2022,VHP2022}). Nevertheless, the uniform-in-time conditional propagation of chaos has received comparatively less attention. To the best of our knowledge, existing uniform-in-time results are restricted to specific settings: \cite{M} established this theory for \eqref{eq1.1} with constant diffusions and a convolution-type drift, while \cite{BW} recently extended it to the case of a constant $g^0$ and a state-dependent $g$. The final objective is to establish the uniform-in-time propagation of chaos for a significantly more general class of MV-SDEs \eqref{eq1.1}. 

The dynamics of the MV-SDEswCN  \eqref{eq1.1} depend intrinsically on both the state trajectory and its conditional distribution, introducing more randomness. This intricate coupling implies that the solution process $X_t$ of \eqref{eq1.1} fails to possess the time-homogeneous Markov property. This fundamental distinction renders standard probabilistic tools---such as localization procedures and the Yamada-Watanabe theorem---inapplicable, thereby posing a significant challenge to the dynamical analysis. To circumvent this issue, we lift the system to the solution couple $(X_t,\mathcal{L}^1(X_t))$. Inspired by Wang's approach of constructing semigroups via distribution flows \cite{W2018}, we build a time-homogeneous semigroup by the measure couple $(\mathcal{L}(X_t),\mathcal{L}(\mathcal{L}^1(X_t)))$ for this lifted process, which paves the way for proving the existence and uniqueness of the invariant measure. A crucial prerequisite for this construction is the uniqueness of the measure pair $(\mathcal{L}(X_t),\mathcal{L}(\mathcal{L}^1(X_t)))$. Although weak uniqueness has been investigated \cite{CD2018b,HSS2021}, the case with nonlinear coefficients depending on both the state and the measure remains unexplored. Thus, we first establish the weak uniqueness of the solution couple. Building upon this, we construct an operator $\Phi_t$ by the joint distribution flow and verify that $\{\Phi_t\}_{t\geq 0}$ forms a Markov semigroup. Ultimately, this semigroup framework not only enables us to establish the existence and uniqueness of the invariant measure, but also allows us to prove the exponential ergodicity.  Notably, despite having established the invariant measure theory, the coupling between the state and the measure in the coefficients still necessitates delicate handling during the proof of the strong LLN. To circumvent this difficulty, we introduce a decoupling technique by freezing the conditional distribution process. Through this approach, we successfully establish the strong LLN for the solution couple 
$(X_t,\mathcal{L}^1(X_t))$ and the solution $X_t$ itself. Crucially, this provides a much more comprehensive characterization of the long-time behavior than analyzing the invariant measure of the conditional distribution alone.

The main contributions of this paper are summarized as follows:
\begin{itemize}
	\item We investigate a large class of MV-SDEswCN \eqref{eq1.1}, where both the drift and diffusion coefficients depend on the state and the measure, and the drift term exhibits at most polynomial growth.
	
	\item We propose a novel methodological framework for establishing the ergodicity property of the MV-SDEswCN \eqref{eq1.1}. By constructing a product measure space and defining an appropriate operator semigroup, we simultaneously capture the invariant measure information for both the conditional distribution $\mathcal{L}^1(X_t)$ and the state $X_t$. Building upon this framework, we further establish the strong LLN through a decoupling approach

	\item We establish the uniform-in-time conditional propagation of chaos for this general class of equations. Furthermore, we prove the convergence of the single-particle distribution and the empirical measure of the IPSwCN \eqref{eq5.1} to the marginals of the invariant measure of the limiting MV-SDEswCN  \eqref{eq1.1}, thereby validating the particle approximation on infinite time scales.
\end{itemize}

The remainder of this paper is organized as follows. Section \ref{Sect.1} introduces some notations and definitions.  Section \ref{Sect.2} establishes the existence and uniqueness of the invariant measure for \eqref{eq1.1}. Section \ref{Sect.3} investigates the strong law of large numbers of the MV-SDEwCN \eqref{eq1.1}.  Section \ref{Sect.4} is dedicated to proving the long-time convergence of the IPSwCN \eqref{eq5.1} to the MV-SDEwCN \eqref{eq1.1}. Section \ref{Sect.5} concludes this paper with two examples.

\section{Preliminaries}\label{Sect.1}
We first give the notations used in this paper. For any $a,b\in\mathbb{R}$, let $a\vee b=\max\{a,b\}$. Let $K$ denote the positive constant which may take different values in different places. We use $K_r$ to emphasize the dependence on parameter $r$. Let $\delta_x$ denote the Dirac measure at $x$. The Euclidean norm of a vector and the Hilbert-Schmidt norm
of a matrix are both denoted by $|\cdot|$. 
For the inner product of two vectors $x,y \in \mathbb{R}^d$, we write $x^Ty$. Let $\mathcal{C}(\mathbb{R}^d;\mathbb{R})$ be the family of the continuous functions from $\mathbb{R}^d$ to $\mathbb{R}$ and   $\mathcal{C}^{2}(\mathbb{R}^d;\mathbb{R})$ be the family of continuously twice differentiable functions from $\mathbb{R}^d$ to $\mathbb{R}$. For a function $V\in \mathcal{C}^{2}(\mathbb{R}^d;\mathbb{R})$,  we set $$V_x =\left(\frac{\partial V}{\partial x_1},\cdots,\frac{\partial V}{\partial x_d}\right), ~~\quad V_{xx}=\left(\frac{\partial^2 V}{\partial x_i x_j}\right)_{d\times d}.$$
Let $\mathcal{C}_c^2(\mathbb{R}^d;\mathbb{R})$ be the family of  continuously twice differentiable functions with compact support
from $\mathbb{R}^d$ to $\mathbb{R}$. Let $\mathcal{C}_c^\infty(\mathbb{R}^d;\mathbb{R})$ be the family of smooth functions with compact support
from $\mathbb{R}^d$ to $\mathbb{R}$. 
Let $(E, \rho)$ be a metric space and
let $\mathcal{P}(E)$ denote the space of all probability measures taking values on $(E, \mathcal{B}(E))$ equipped with the weak topology. For $k\geq1$, let
$$
\mathcal{P}_k (E):=\left\{{u} \in \mathcal{P}(E): {\int_{E} \rho^k(x, x_0) u(\mathrm{d}x) }<\infty\right\},
$$
for some $x_0\in E$.
Especially, for $ u, v \in \mathcal{P}_k(\mathbb{R}^d)$, define
$${W}_k(u, v)\!=\!\inf_{\pi \in \Pi(u, v)}\Big(\int_{\mathbb{R}^d \times \mathbb{R}^d}|x-y|^k \pi(\mathrm{d} x, \mathrm{d} y)\Big)^{1/k},
$$
where $\Pi(u, v)$ is the set of all couplings of $u$ and $v$. By Theorem 6.18 in \cite{V2009}, $(\mathcal{P}_k(\mathbb{R}^d),{W}_k)$ is a Polish space.
Furthermore, For $\mu, \upsilon\in\mathcal{P}_k(\mathcal{P}_k(\mathbb{R}^d))$, define the Wasserstein distance 
$$
\mathbb{W}_k(\mu, \upsilon)\!=\!\inf_{\pi \in \Pi(\mu, \upsilon)}\Big(\int_{\mathcal{P}_k(\mathbb{R}^d)\times \mathcal{P}_k(\mathbb{R}^d)}{W}_k^k(u,v) \pi(\mathrm{d} u, \mathrm{d} v)\Big)^{1/k}, 
$$
where $\Pi(\mu, \upsilon)$ is the set of all couplings of $\mu$ and $\upsilon$. Theorem 6.18 in \cite{V2009} pointed out that $(\mathcal{P}_k(\mathcal{P}_k(\mathbb{R}^d)),\mathbb{W}_k)$ is also a Polish space.

We now give the probability framework of the paper.
Let $(\Omega^0, \mathcal{F}^0, \mathbb{P}^0)$ and $(\Omega^1, \mathcal{F}^1,\mathbb{P}^1)$ be two complete probability spaces endowed with complete and right-continuous filtration $(\mathcal{F}_t^0)_{t \geq 0}$ and $(\mathcal{F}_t^1)_{t \geq 0}$, respectively. Here $(B^0_t)_{t\geq 0}$ and $(B_t)_{t\geq 0}$  are two independent $d$-dimensional Brownian motions defined on  $\left(\Omega^0, \mathcal{F}^0, \mathbb{P}^0\right)$ and $\left(\Omega^1, \mathcal{F}^1, \mathbb{P}^1\right)$, respectively.  
Define the product space $(\Omega, \mathcal{F}, \mathbb{P})$, where
$\Omega=\Omega^0 \times \Omega^1,$ {$(\mathcal{F},\mathbb{P})$} is the completion of $(\mathcal{F}^0\otimes \mathcal{F}^1,\mathbb{P}^0 \otimes \mathbb{P}^1)$, and $(\mathcal{F}_t)_{t\geq 0}$ is the complete and right-continuous augmentation of $(\mathcal{F}^0_t\otimes\mathcal{F}^1_t)_{t\geq 0}$. 
We denote by $\mathbb{E}$ and $\mathbb{E}^0$ the expectations with respect to $\mathbb{P}$ and $\mathbb{P}^0$, respectively.
For a random variable $\xi$, we denote its probability distribution by $\mathcal{L}(\xi)$. For $k\geq1$, let $L^k(\Omega;\mathbb{R}^d)$ be the family of random variables $\xi:\Omega\rightarrow \mathbb{R}^d$ satisfying $\mathbb{E}|\xi|^k<\infty$, and ${L^k(\Omega^0;\mathcal{P}_k(\mathbb{R}^d))}$ be the family of random variables $u:\Omega^0\rightarrow \mathcal{P}_k(\mathbb{R}^d)$ satisfying  $\mathbb{E}^0({W}_k^k(u,\delta_{\bf 0}))<\infty$, where $\delta_{\bf 0}$ denote the Dirac measure at ${\bf 0}\in\mathbb{R}^d$.
For random variable $\xi:\Omega\rightarrow \mathbb{R}^d$, 
we now define the mapping $\mathcal{L}^1(\xi)(\omega^0)= \mathcal{L}(\xi(\omega^0, \cdot))$ for any $\omega^0\in\Omega^0$.
By Lemma 2.4 in \cite{CD2018b}, the mapping $\mathcal{L}^1(\xi)(\cdot)$ is not only a random variable from $\Omega^0$ to $\mathcal{P}(\mathbb{R}^d)$ but also the conditional distribution of $\xi$ given $\mathcal{F}^0$.
With a slight abuse of notation, we do not distinguish a random variable $\xi$ on $\Omega^0$ from its natural extension on $\Omega$, and a sub-$\sigma$-algebra $\mathcal{G}^0$ of $\mathcal{F}^0$ from $\mathcal{G}^0\otimes \{\varnothing,\Omega^1\}\subseteq\mathcal{F}$.  Since the conditional distribution is involved, we note that for any $k>1$ and random variables $\xi_1,\xi_2\in L^k(\Omega;\mathbb{R}^d)$
\begin{align}\label{ls1}
	\begin{aligned}
		\mathbb{W}_k^k(\mathcal{L}(\mathcal{L}^1(\xi_1)),\mathcal{L}(\mathcal{L}^1(\xi_2)))&\leq\mathbb{E}^0W_k^k(\mathcal{L}^1(\xi_1),\mathcal{L}^1(\xi_2))
		\\&
		\leq\mathbb{E}^0\mathbb{E}^1|\xi_1-\xi_2|^k=\mathbb{E}|\xi_1-\xi_2|^k.
	\end{aligned}
\end{align}
We further assume the same probability frame on a new space $\tilde{\Omega}=\tilde{\Omega}^0\times \tilde{\Omega}^1$ as on ${\Omega}$. 
Next, we give the definition of the solution to the MV-SDEwCN \eqref{eq1.1}.
\begin{De}
(1) An $\mathbb{R}^d$-valued stochastic process $(X_t^s)_{t\geq s} $  is called a  strong solution to the MV-SDEwCN \eqref{eq1.1} with initial data $
		\varsigma\in L^2(\Omega;\mathbb{R}^d)$ starting at $s$, if 
		it is continuous $(\mathcal{F}_t)_{t\geq s} $-adapted  stochastic process satisfying that for any $T\geq  s$, $\mathbb{E}[\sup_{s\leq t\leq T}|X_t^s|^2]<\infty$, 
		$$
		\mathbb{E}\int_s^T\big(|f(X_t^s,\mathcal{L}^1(X_t^s))|+|g(X_t^s, \mathcal{L}^1(X_t^s))|^2
		+|g^0( X_t^s,\mathcal{L}^1(X_t^s))|^2\big) \mathrm{d} t<\infty,
		$$
		and  in $\mathbb{P}$-a.s.
		\begin{align*}
			X_t^s&=\varsigma+\int_s^tf(X^s_r ,\mathcal{L}^1(X^s_r))
			\mathrm{d} r+\int_s^tg(X^s_r ,\mathcal{L}^1(X^s_r))\mathrm{d} B_r
			+\int_s^t g^0(X^s_r,\mathcal{L}^1(X^s_r)) \mathrm{d} B_r^0.
		\end{align*}
		(2) A quadri-tuple $(\tilde{X}^s_t,\mathcal{L}^1(\tilde{X}^s_t),\tilde{B}_t^0,\tilde{B}_t)_{t\geq s}$ with respect to the complete probability space $(\tilde{\Omega},\tilde{\mathcal{F}},\tilde{\mathbb{P}})$ is called a weak solution to the MV-SDEwCN \eqref{eq1.1}  with initial data $
		\tilde{\varsigma}\in L^2(\tilde{\Omega};\mathbb{R}^d)$ starting at $s$, where $\tilde{B}_t^0,\tilde{B}_t$ are two dependent $d$-dimensional Brownian motion on the complete probability spaces  $(\tilde{\Omega}^0,\tilde{\mathcal{F}}^0,\tilde{\mathbb{P}}^0)$ and  $(\tilde{\Omega}^1,\tilde{\mathcal{F}}^1,\tilde{\mathbb{P}}^1)$, if
		for any $T\geq  s$, $\mathbb{\tilde{E}}[\sup_{s\leq t\leq T}|\tilde{X}^s_t|^2]<\infty$, 
		$$
		\mathbb{\tilde{E}}\int_s^T\big(|f(\tilde{X}^s_t,\mathcal{L}^1(\tilde{X}^s_t))|+|g( \tilde{X}^s_t, \mathcal{L}^1(\tilde{X}^s_t))|^2
		+|g^0( \tilde{X}^s_t ,\mathcal{L}^1(\tilde{X}^s_t))|^2\big) \mathrm{d} t<\infty,
		$$
		and 
		$\mathbb{\tilde{P}}$-a.s.
		\begin{align*}
			\tilde{X}_t^s&=\tilde{\varsigma}+\int_s^t f(\tilde{X}^s_r ,\mathcal{L}^1(\tilde{X}^s_r))
			\mathrm{d} r+\int_s^t g(\tilde{X}^s_r ,\mathcal{L}^1(\tilde{X}^s_r))\mathrm{d} \tilde{B}_r
			+\int_s^t g^0(\tilde{X}^s_r,\mathcal{L}^1(\tilde{X}^s_r)) \mathrm{d} \tilde{B}_r^0.
		\end{align*}
		(3) The MV-SDEwCN \eqref{eq1.1} is called weakly unique in $\mathcal{P}_k(\mathbb{R}^d)\times\mathcal{P}_k(\mathcal{P}_k(\mathbb{R}^d))$, if for any pair of weak solutions $( {X}^s_t,\mathcal{L}^1( {X}^s_t), {B}_t, {B}_t^0)_{t\geq s}$
		and $(\tilde{X}^s_t,\mathcal{L}^1(\tilde{X}^s_t),\tilde{B}_t,\tilde{B}_t^0)_{t\geq s}$ of \eqref{eq1.1},  \\
		$$(\mathcal{L}( {X}^s_s),\mathcal{L}(\mathcal{L}^1( {X}^s_s)))= (\mathcal{L}(\tilde{X}^s_s),\mathcal{L}(\mathcal{L}^1(\tilde{X}^s_s))) \in \mathcal{P}_k(\mathbb{R}^d)\times\mathcal{P}_k(\mathcal{P}_k(\mathbb{R}^d))$$ implies that 
		$$ (\mathcal{L}( {X}^s_t),\mathcal{L}(\mathcal{L}^1( {X}^s_t)))=(\mathcal{L}(\tilde{X}^s_t),\mathcal{L}(\mathcal{L}^1(\tilde{X}^s_t)))\in \mathcal{P}_k(\mathbb{R}^d)\times\mathcal{P}_k(\mathcal{P}_k(\mathbb{R}^d)), \quad\forall t > s.$$
\end{De}

Consider the stochastic partial differential equation (SPDE) corresponding to the MV-SDEwCN \eqref{eq1.1} 
\begin{align}\label{r3}
	\mathrm{d}\langle u_t,\varphi\rangle=\langle u_t,L_{u_t} \varphi\rangle\mathrm{d}t+\langle u_t,\varphi_x g^0(\cdot, u_t)\rangle\mathrm{d}B_t^0,\quad t\geq0,~\varphi\in \mathcal{C}_c^\infty(\mathbb{R}^d;\mathbb{R}),
\end{align}
where for any $u\in \mathcal{P}(\mathbb{R}^d)$, $\langle u ,\varphi\rangle=\int_{\mathbb{R}^d}\varphi(x) u (\mathrm{d}x)$ and 
\begin{align*}
	L_{ u } \varphi:=\varphi_x f(\cdot, u )+\frac{1}{2}\operatorname{trace}[(g(\cdot, u )(g(\cdot, u ))^T+g^0(\cdot, u )(g^0(\cdot, u ))^T)\varphi_{xx}^T].
\end{align*}
Next we give some definitions on the solutions of the SPDE \eqref{r3} in different means, including probability strong solution, probability weak solution and probability weak uniqueness.
\begin{De}\label{d1}
		(1) For any $s\geq0$, a continuous $\mathcal{P}(\mathbb{R}^d)$-valued process $( u_t^s)_{t\geq s}$ defined on
		$(\Omega^0,  \mathcal{F}^0, \mathbb{P}^0)$ is called a probability strong solution to the SPDE \eqref{r3} starting at $s$, if
		\begin{align*}
			\mathbb{E}^0\bigg[\int_{s}^t\int_{\mathbb{R}^d}(|f(x,u^s_r)|+|g(x, u^s_r)|^2+|g^0(x, u^s_r)|^2) u^s_r(\mathrm{d}x)\mathrm{d}r\bigg]<\infty, \quad t> s,
		\end{align*}
		and~$\mathbb{P}^0$-a.s., $\forall t\geq s,~\varphi\in \mathcal{C}_c^\infty(\mathbb{R}^d;\mathbb{R})$
		\begin{align*}
			&\langle u^s_t,\varphi\rangle=\langle u^s_s,\varphi\rangle+\int_s^t\langle u^s_r,L_{ u^s_r}
			\varphi\rangle\mathrm{d}r
			+\int_s^t\langle u^s_r,\varphi_x g^0(\cdot, u^s_r)\rangle\mathrm{d}B_r^0.
		\end{align*}
		We say the SPDE \eqref{r3} has probability strong existence and uniqueness in $L^k(\Omega^0;$ $ \mathcal{P}(\mathbb{R}^d))$, if for any 
		initial data $u_{s}^{s}\in L^k(\Omega^0; \mathcal{P}(\mathbb{R}^d)),$ 
		the SPDE \eqref{r3}   has a unique solution 
		$( u^s_t)_{t\geq s}$ taking values in $ L^k(\Omega^0;\mathcal{P}(\mathbb{R}^d))$ for any $t\geq s$. \\
		(2) A couple $(\tilde{u}^s_t,\tilde{B}_t^0)_{t\geq s}$ is called a probability weak solution to the SPDE \eqref{r3} starting at $s$, where $\tilde{B}_t^0$ is a $d$-dimensional Brownian motion on the complete probability space $(\tilde{\Omega}^0,\tilde{\mathcal{F}}^0,\tilde{\mathbb{P}}^0)$ if it satisfies  
		\begin{align*}
			\mathbb{\tilde{E}}^0\bigg[\int_{s}^t\int_{\mathbb{R}^d}(|f(x,\tilde{u}^s_r)|+|g(x, \tilde{u}^s_r)|^2+|g^0(x, \tilde{u}^s_r)|^2) \tilde{u}^s_r(\mathrm{d}x)\mathrm{d}r\bigg]<\infty, \quad t> s,
		\end{align*}
		and~$\mathbb{\tilde{P}}^0$-a.s. 
		\begin{align*}
			&\langle\tilde{u}^s_t,\varphi\rangle\!=\!\langle\tilde{u}^s_s,\varphi\rangle\!+\!\int_s^t\!\!\langle\tilde{u}^s_r,L_{\tilde{u}^s_r} \varphi\rangle\mathrm{d}r
			\!+\!\int_s^t\!\!\langle\tilde{u}^s_r, \varphi_x g^0(\cdot,\tilde{u}^s_r)\rangle\mathrm{d}\tilde{B}_r^0,
			~\forall t\geq s,~\varphi\in \mathcal{C}_c^\infty(\mathbb{R}^d;\mathbb{R}).
		\end{align*}
	 (3) SPDE \eqref{r3} starting at $s$ is said to have probability weak  uniqueness  in $\mathcal{P}_k(\mathcal{P}_k(\mathbb{R}^d)\!)$, if for any pair of weak solutions 
		$({u}^{s}_{t}, {B}^0_t)_{t \geq s}$ and
		$(\tilde{u}^{s}_{t}, \tilde{B}^0_t)_{t \geq s}$, 
		$$\mathcal{L}({{u}^s_{ s}})=\mathcal{L}({\tilde{u}^{s}_{s}})\in \mathcal{P}_k(\mathcal{P}_k(\mathbb{R}^d))$$ implies 
		$$\mathcal{L}({{u}^s_{t}})=\mathcal{L}({\tilde{u}^s_{t}})\in \mathcal{P}_k(\mathcal{P}_k(\mathbb{R}^d))$$
		for any $t > s$.
\end{De}

We end this section by stating the necessary assumptions on the coefficients and presenting some useful results for the MV-SDEwCN \eqref{eq1.1}.
\begin{itemize}
	\item[\bf(A1)]
	There exist positive constants $l\geq2$, $c_1,c_2$ such that for any $(x,u)\in\mathbb{R}^d\times\mathcal{P}_{2}(\mathbb{R}^d)$,
	\begin{align*}
		|f(x, u)|^2&\leq c_1(1+|x|^l+{W}_2^2( u,\delta_{\bf 0})),\\
		|g(x, u)|^2+|g^0(x, u)|^2&\leq c_2(1+|x|^2+{W}_2^2( u,\delta_{\bf 0})).
	\end{align*}
	\item [\bf (A2)]
	$f$ is locally Lipschitz in $\mathbb{R}^d\times \mathcal{P}_2(\mathbb{R}^d)$. There exist positive constants $p> 2$, $c_3>c_4$ such that for any $(x, u),(y,v)\in\mathbb{R}^d\times\mathcal{P}_{2}(\mathbb{R}^d)$,
	\begin{align*}
		&2(x-y)^T(f(x, u)-f(y, v))+(p-1)|g(x, u)-g(y, v)|^2\\&+(p-1)|g^0(x, u)-g^0(y, v)|^2)
		\leq -c_3|x-y|^2+c_4{W}_2^2( u, v).
	\end{align*}
\end{itemize}

\begin{rem}
	Under {\bf (A2)}, one notices that both $g$ and $g^0$ are locally Lipschitz in $\mathbb{R}^d\times \mathcal{P}_2(\mathbb{R}^d)$. Moreover,  there exists positive constants $c_5$ and $c_6>c_7$ such that for any $(x,u)\in\mathbb{R}^d\times\mathcal{P}_{2}(\mathbb{R}^d)$,
	\begin{align}\label{A2}
		&2x^Tf(x, u)+(p-1)(| g(x, u)|^2+|g^0(x, u)|^2)
		\leq c_5-c_6|x|^2+c_7{W}_2^2( u,\delta_{\bf 0}).
	\end{align}
\end{rem}

By virtue of Theorem 2.1 in \cite{KNRS2022}, we obtain the existence and uniqueness of the solution to the MV-SDEwCN \eqref{eq1.1}.
	\begin{lem}\label{le2.4}
		Under {\bf(A1)} and {\bf(A2)}, the MV-SDEwCN \eqref{eq1.1} with initial value $\varsigma \in L^p(\Omega;\mathbb{R}^d)$ has a unique strong solution $(X_t)_{t\geq0}$ satisfying 
		$\mathbb{E}|X_t|^p<\infty$  for any $t\geq 0$.
	\end{lem}
	Next we consider the propagation of chaos of the MV-SDEwCN \eqref{eq1.1}.  We first introduce the corresponding non-IPSwCN
	\begin{align}\label{eq5.2}
		\begin{aligned}
			\mathrm{d}{X}^{i}_t&= f({X}^{i}_t,\mathcal{L}^1({X}^{i}_t))\mathrm{d} t
			+g({X}^{i}_t,\mathcal{L}^1({X}^{i}_t))\mathrm{d}B^i_t
			+g^0({X}^{i}_t,\mathcal{L}^1({X}^{i}_t))\mathrm{d}B^{0}_t,\quad i\geq 1,
		\end{aligned}
	\end{align}
	with the initial condition  $X_0^i=\varsigma^i$, where  $(\varsigma^i, B^i_t)_{i\geq1}$ is a sequence of independent copies of $(\varsigma, B_t)$ with $\varsigma\in L^p(\Omega;\mathbb{R}^d)$.  Note that the IPSwCN \eqref{eq5.1} can be directly reformulated as a classical SDE in $\mathbb{R}^{d\times N}$,  whose the drift and diffusion coefficients (valued in $\mathbb{R}^{d\times N}$ and $\mathbb{R}^{d\times d\times N}$, respectively) inheriting the conditions {\bf (A1)} and {\bf (A2)} (see, e.g. \cite{KNRS2022}). Consequently, IPSwCN \eqref{eq5.1} with initial data $X_0^{i, N}=\varsigma^i~ (i=1, \cdots, N)$ has a  unique solution $\mathbf{X}^N_t=(X^{1,N}_t,\cdots,X^{N,N}_t)$ for $t\geq 0$ by \cite{M2008}. Then the propagation of chaos can be easily derived from Proposition 1 in \cite{KNRS2022}.
	\begin{lem} Let {\bf(A1)} and {\bf(A2)} hold. Then for any $2\leq q<p$,
		\begin{align*}
			&\lim_{N\rightarrow \infty}\sup_{t\in[0,T]}\mathbb{E}|{X}^i_t -X^{i,N}_t|^{q}=0, ~~~~~~\lim_{N\rightarrow \infty}\sup_{t\in[0,T]}\mathbb{E}{W}_{q}^{q}\big(\mathcal{L}^1({X}^{i}_t), {u_t^N} \big)=0,
		\end{align*}
		where $ (X^{1,N}_t,\cdots,X^{N,N}_t) $ is the solution  to the IPS \eqref{eq5.1} and $({X}^i_t)_{i=1,\cdots,N}$ are the solutions to the non-IPSwCN \eqref{eq5.2}, and  $u_t^N=\frac{1}{N}\sum_{j=1}^N\delta_{X^{j,N}_t}$.
	\end{lem}

\section{The invariant measure}\label{Sect.2}
This section establishes the existence and uniqueness of the invariant measure for the MV-SDEwCN \eqref{eq1.1}. As the state process $X_t$ lacks the Markov semigroup property, the standard approach to invariant measures is not directly applicable. We therefore lift the system to the solution couple $(X_t, \mathcal{L}^1(X_t))$.  We first prove that the uniqueness of the corresponding measure couple $(\mathcal{L}(X_t), \mathcal{L}(\mathcal{L}^1(X_t)))$ (the weak uniqueness defined in Preliminaries). By leveraging the weak uniqueness of this couple, we define an operator via its distribution flow and verify that it forms a time-homogeneous Markov semigroup. This semigroup framework then allows us to establish the existence and uniqueness of the invariant measure for the solution couple.
	
	We begin this section with showing that the solution couple of MVSDEwCN \eqref{eq1.1} is weakly unique.
	\begin{lem}\label{ls}
		Let {\bf(A1)} and {\bf(A2)} hold with $p> l/2$.  Then the solution couple of MVSDEwCN \eqref{eq1.1} has weak uniqueness. 
	\end{lem}
	
	\begin{proof}Since the proof is rather technical, we divide it into three steps.\\
		{\bf Step 1.} Any conditional distribution of the solution to the MV-SDEwCN \eqref{eq1.1} with respect to common noise is the solution to SPDE \eqref{r3}. Let Brownian motions $B_t^i,B_t^0$ be defined on probability space $(\Omega,\mathcal{F},\mathbb{P})$ and $\tilde{B}_t^i,\tilde{B}_t^0$ be defined on $(\tilde{\Omega},\tilde{\mathcal{F}},\tilde{\mathbb{P}})$, respectively. Let $({X}_t,\mathcal{L}^1({X}_t))_{t\geq 0}$ be the  solution couple to the MV-SDEwCN 
		\begin{align}\label{eqX}
			\mathrm{d}X_t&=f(X_t ,\mathcal{L}^1(X_t))
			\mathrm{d} t+g(X_t ,\mathcal{L}^1(X_t))\mathrm{d} B_t
			+g^0(X_t,\mathcal{L}^1(X_t)) \mathrm{d} B_t^0,
		\end{align}
		with initial value $\varsigma\in L^p(\Omega;\mathbb{R}^d)$ and $({\tilde{X}}_t,\mathcal{L}^1({\tilde{X}}_t))_{t\geq 0}$ be the  solution couple to MV-SDEwCN 
		\begin{align}\label{eqX1}
			\mathrm{d}\tilde{X}_t&=f(\tilde{X}_t ,\mathcal{L}^1(\tilde{X}_t))
			\mathrm{d} t+g(\tilde{X}_t ,\mathcal{L}^1(\tilde{X}_t))\mathrm{d} \tilde{B}_t
			+g^0(\tilde{X}_t,\mathcal{L}^1(\tilde{X}_t)) \mathrm{d} \tilde{B}_t^0,
		\end{align} 
		with initial value $\tilde{\varsigma}\in L^p(\tilde{\Omega};\mathbb{R}^d)$, where $\mathcal{L}(\varsigma)=\mathcal{L}(\tilde{\varsigma})$ and $\mathcal{L}(\mathcal{L}^1(\varsigma))=\mathcal{L}(\mathcal{L}^1(\tilde{\varsigma}))$. 
		Applying {\bf(A1)}, the H\"older inequality and Lemma \ref{le2.4} yields that for any $T>0$
		\begin{align}\label{ls2}
			\begin{aligned}
				&\int_0^T\mathbb{E}(|f(X_t, \mathcal{L}^1(X_t))|+|g(X_t, \mathcal{L}^1(X_t))|^{2}+|g^0(X_t, \mathcal{L}^1(X_t))|^{2})\mathrm{d}t
				\\&\leq \int_0^T\mathbb{E}\Big(c_1\Big(1\!+\!|X_t|^l\!+\!{W}_2^2(\mathcal{L}^1(X_t),\delta_{\bf 0})\Big)^{\frac{1}{2}}
				\!+\!
				2\Big(c_2\Big(1\!+\!|X_t|^2\!+\!{W}_2^2(\mathcal{L}^1(X_t),\delta_{\bf 0})\Big)\Big)\Big)\mathrm{d}t		
				\\&\leq K \int_0^T\Big( 1\!+\! \mathbb{E}|X_t|^\frac{l}{2}\!+\!\mathbb{E}|X_t|^{2}\!+\!
				\mathbb{E}{W}_2^2(\mathcal{L}^1(X_t),\delta_{\bf 0}))
				\Big)\mathrm{d}t
				\\&\leq K\int_0^T\big(1+ \mathbb{E}|X_t|^p\big)\mathrm{d}t<\infty.
			\end{aligned}
		\end{align}
		Similarly, 
		\begin{align*}
			&\int_0^T\mathbb{E}(|f(\tilde{X}_t, \mathcal{L}^1(\tilde{X}_t))|+|g(\tilde{X}_t, \mathcal{L}^1(\tilde{X}_t))|^{2}+|g^0(\tilde{X}_t, \mathcal{L}^1(\tilde{X}_t))|^{2})\mathrm{d}t
			\nn\\&\leq K\int_0^T\big(1+ \mathbb{E}|\tilde{X}_t|^p\big)\mathrm{d}t<\infty.
		\end{align*}
		Then, by Proposition 1.2  in \cite{LSZ2023} and Proposition 2.11 in \cite{KNRS2022}, the conditional distribution 
		$(\mathcal{L}^1(X_t))_{t\geq 0}$ and $(\mathcal{L}^1(\tilde{X}_t))_{t\geq 0}$ of the solutions $X_t$ and $\tilde{X}_t$ to \eqref{eqX} and \eqref{eqX1} are the unique strong probability solutions to the SPDE 
		\begin{align}\label{equ}
			&\langle u^s_t,\varphi\rangle=\langle u^s_s,\varphi\rangle+\int_s^t\langle u^s_r,L_{ u^s_r}
			\varphi\rangle\mathrm{d}r
			+\int_s^t\langle u^s_r,\varphi_x g^0(\cdot, u^s_r)\rangle\mathrm{d}B_r^0,
		\end{align}
		and 	\begin{align}\label{equ1}
			&\langle\tilde{u}^s_t,\varphi\rangle\!=\!\langle\tilde{u}^s_s,\varphi\rangle\!+\!\int_s^t\!\!\langle\tilde{u}^s_r,L_{\tilde{u}^s_r} \varphi\rangle\mathrm{d}r
			\!+\!\int_s^t\!\!\langle\tilde{u}^s_r, \varphi_x g^0(\cdot,\tilde{u}^s_r)\rangle\mathrm{d}\tilde{B}_r^0,
		\end{align}
		where $t\geq 0,~\varphi\in \mathcal{C}_c^\infty(\mathbb{R}^d;\mathbb{R})$, 
		with initial value $\mathcal{L}^1(\varsigma)$ and $\mathcal{L}^1(\tilde{\varsigma})$, respectively.\\
		{\bf Step 2.} The SPDE \eqref{r3} is weakly unique. To this end, we only need to prove that  $\mathcal{L}(\mathcal{L}^1(X_t))=\mathcal{L}(\mathcal{L}^1(\tilde{X}_t)), $ for any $t\geq0$.
		Let $B_t^i, \tilde{B}_t^i,i=1,\cdots,N$ be the independent copies of $B_t, \tilde{B}_t$ in probability space $(\Omega,\mathcal{F},\mathbb{P})$ and $(\tilde{\Omega},\tilde{\mathcal{F}},\tilde{\mathbb{P}})$, respectively. Then the corresponding particle systems of \eqref{eqX} and \eqref{eqX1} are
		\begin{align*}
			\mathrm{d}X^{i,N}_t&=f(X^{i,N}_t,u_t^N)\mathrm{d}t+g(X^{i,N}_t,u_t^N)\mathrm{d}B^i_t
			+g^0(X^{i,N}_t,u_t^N)\mathrm{d}B^{0}_t,~~ i=1,\cdots,N,
		\end{align*}
		and 
		\begin{align*}
			\mathrm{d}\tilde{X}^{i,N}_t&=f(\tilde{X}^{i,N}_t,\tilde{u}_t^N)\mathrm{d}t+g(\tilde{X}^{i,N}_t,\tilde{u}_t^N)\mathrm{d}\tilde{B}^i_t
			+g^0(\tilde{X}^{i,N}_t,\tilde{u}_t^N)\mathrm{d}\tilde{B}^{0}_t,~~ i=1,\cdots,N,
		\end{align*}
		respectively, where $u_t^N=\frac{1}{N}\sum_{j=1}^N\delta_{X^{j,N}_t}$ and $\tilde{u}_t^N=\frac{1}{N}\sum_{j=1}^N\delta_{\tilde{X}^{j,N}_t}$.
		By the propagation of chaos, we have that for any $T\geq 0$
		\begin{align*}
			\lim_{N\rightarrow \infty}\sup_{t\in[0,T]}\mathbb{E}{W}_{2}^{2}\big(\mathcal{L}^1({X}^{i}_t), {u_t^N} \big)=0, \quad \lim_{N\rightarrow \infty}\sup_{t\in[0,T]}\mathbb{E}{W}_{2}^{2}\big(\mathcal{L}^1(\tilde{{X}}^{i}_t), {\tilde{u}_t^N} \big)=0.
		\end{align*}
		This implies that ${u_t^N},{\tilde{u}_t^N}$ converge to $\mathcal{L}^1({X}^{i}_t), \mathcal{L}^1(\tilde{{X}}^{i}_t)$ in distribution on $\mathcal{P}(\mathbb{R}^d)$, respectively. We claim that $\mathcal{L}(\mathcal{L}^1({X}^{i}_t))=\mathcal{L}(\mathcal{L}^1(\tilde{X}^{i}_t))$. In fact,
		it follows from the exchangeability of the particles and the existence and uniqueness of the solution of those two particle systems that
		$X^{i,N}_t$ and $\tilde{X}^{i,N}_t, i=1,\cdots,N,$ are identically distributed. This gives that  $u_t^N$ and $\tilde{u}_t^N$ are identically distributed for any $N\geq 0$. Then the convergence of ${u_t^N},{\tilde{u}_t^N}$ in distribution on $\mathcal{P}(\mathbb{R}^d)$, combined with the identical distribution of $u_t^N$ and $\tilde{u}_t^N$, shows that $\mathcal{L}^1({X}^{i}_t)$ and $\mathcal{L}^1(\tilde{{X}}^{i}_t)$ are identically distributed. \\
		{\bf Step 3.} The MV-SDEwCN is weakly unique. As proved in Step 2,  $\mathcal{L}(\mathcal{L}^1(X_t))=\mathcal{L}(\mathcal{L}^1(\tilde{X}_t))), \forall t\geq0$. This implies that for any bounded measurable function $\varphi$ from $\mathbb{R}^d$ to $\mathbb{R}$, 
		\begin{align*}
			\langle\mathcal{L}(X_t), \varphi\rangle&=\mathbb{E}\varphi(X_t)=\mathbb{E}^0\mathbb{E}^1(\varphi(X_t))=\mathbb{E}^0(\langle\mathcal{L}^1(X_t)(\omega^0), \varphi\rangle)\nn\\&=\int_{\mathcal{P}(\mathbb{R}^d)} \langle u, {\varphi}\rangle\mathcal{L}(\mathcal{L}^1(X_t))(\mathrm{d} u)=\int_{\mathcal{P}(\mathbb{R}^d)} \langle u, {\varphi}\rangle\mathcal{L}(\mathcal{L}^1(\tilde{X}_t))(\mathrm{d} u)
			\nn\\&=\mathbb{E}^0\mathbb{E}^1(\varphi(\tilde{X}_t))=\mathbb{E}\varphi(\tilde{X}_t)=\langle\mathcal{L}(\tilde{X}_t),\varphi\rangle,
		\end{align*}
		namely, $\mathcal{L}(X_t)=\mathcal{L}(\tilde{X}_t)$ for any $t\geq0$. This completes the proof.
		\end{proof}
		
	 We next define an operator via the distribution flow $(\mathcal{L}( {X}_t),\mathcal{L}(\mathcal{L}^1( {X}_t)))$ of the solution couple $(X_t, \mathcal{L}^1(X_t))$. However, to rigorously define the aforementioned operator on the Wasserstein space $\mathcal{P}_k(\mathbb{R}^d)\times\mathcal{P}_k(\mathcal{P}_k(\mathbb{R}^d))$, we must first guarantee that the distribution flow does not escape this space over time. This requires the solution to possess uniform moment bounds, which we establish below.

 \begin{lem}
 	\label{l1}
 	Let {\bf(A1)} and {\bf(A2)} hold with $p>l/2$. Then  the solution $(X_t)_{t\geq0}$ to the MV-SDEwCN \eqref{eq1.1} with initial value $\varsigma \in L^p(\Omega;\mathbb{R}^d)$
 	satisfies  $$
 	\sup_{t\geq 0}\mathbb{E}|X_t|^p\leq \mathbb{E}|\varsigma|^p+K,
 	$$ where $K$ is a positive constant independent of $\varsigma$.
 \end{lem}
 \begin{proof}
 It follows from Lemma 4.2 in \cite{M2008} that  
 	\begin{align}\label{in1}a^{p-2}b^2 \leq \frac{(p-2)\epsilon a^p}{p}+\frac{2 b^p}{p\epsilon^{(p-2)/2}},~~~~\forall~\epsilon, a, b>0 .
 	\end{align}
 	Utilizing the It\^o formula, \eqref{A2} and \eqref{in1}, for any $\lambda>0$ (defined latter), we have
 	\begin{align*}
 		&\mathrm{d}\left(e^{\lambda t}|X_t|^p\right)
 		\\&\leq e^{\lambda t}\Big(\lambda|X_t|^p+ \frac{p}{2}|X_t|^{p-2}\big(c_5-c_6|X_t|^2+c_7{W}_2^2(\mathcal{L}^1(X_t),\delta_{\bf 0})\big)\Big)\mathrm{d}t
 		\\&\quad+pe^{\lambda t}|X_t|^{p-2}X_t^Tg(X_t,\mathcal{L}^1(X_t))\mathrm{d}B_t
 		+pe^{\lambda t}|X_t|^{p-2}X_t^Tg^0(X_t,\mathcal{L}^1(X_t))\mathrm{d}B^0_t
 		\\
 		&\leq \frac{p}{2}e^{\lambda t}\Big(K+\Big(\frac{2\lambda}{p}
 		-\frac{  c_6+c_7}{2}\Big)|X_t|^{p}+c_7 |X_t|^{p-2}{W}_2^2(\mathcal{L}^1(X_t),\delta_{\bf 0})\Big)\mathrm{d}t
 		\\&\quad+pe^{\lambda t}|X_t|^{p-2}X_t^Tg(X_t,\mathcal{L}^1(X_t)) \mathrm{d}B_t
 		+pe^{\lambda t}|X_t|^{p-2}X_t^Tg^0(X_t,\mathcal{L}^1(X_t)) \mathrm{d}B^0_t,
 	\end{align*}
 	where 
 	$K=\frac{2c_5}{p} \left[\frac{2(p-2)c_5}{ p(c_6-c_7 )}\right]^{\frac{ p-2}{2}}.$
 	Taking the expectation of both sides, and applying the Fubini theorem, it follows that
 	\begin{align}\label{eq11}
 		e^{\lambda t}\mathbb{E}|X_t|^p
 		\leq \mathbb{E}|\varsigma|^p\!+\frac{p}{2}\!\int_0^t&e^{\lambda s}\Big(K+ \Big(\frac{2\lambda}{p}
 		- \frac{  c_6+c_7}{2}\Big)\mathbb{E}|X_s|^{p}
 		\nn\\&\quad+c_7 \mathbb{E}(|X_s|^{p-2}{W}_2^2(\mathcal{L}^1(X_s),\delta_{\bf 0}))\Big)\mathrm{d}s.
 	\end{align}
 	Making use of the H\"older inequality and \eqref{ls1}, one has \begin{align*} \begin{aligned}
 			\mathbb{E} (|X_s|^{p-2}{W}_2^2(\mathcal{L}^1(X_s),\delta_{\bf 0})) 
 			&\leq [\mathbb{E} |X_s|^{p} ]^{\frac{p-2}{p}}[\mathbb{E}{W}_2^p(\mathcal{L}^1(X_s),\delta_{\bf 0})]^{\frac{ 2}{p}}\\
 			&\leq  [\mathbb{E} |X_s|^{p} ]^{\frac{p-2}{p}}[\mathbb{E}{W}_p^p(\mathcal{L}^1(X_s),\delta_{\bf 0})]^{\frac{ 2}{p}} \leq\mathbb{E}|X_s|^p.\end{aligned}\end{align*}
 	This together with \eqref{eq11} that
 	\begin{align*}
 		e^{\lambda t}\mathbb{E}|X_t|^p
 		&\leq \mathbb{E}|\varsigma|^p+\frac{p}{2}\int_0^te^{ \lambda s}\Big(K+\Big(\frac{2\lambda}{p}
 		- \frac{  c_6-c_7}{2}\Big)\mathbb{E}|X_s|^{p}\Big)\mathrm{d}t.
 	\end{align*}
 	Let $\lambda={p(c_6-c_7)}/{4}>0$. Then,  
 	\begin{align*}
 		e^{\lambda t}\mathbb{E}|X_t|^p
 		&\leq \mathbb{E}|\varsigma|^p+\frac{pK}{2}\int_0^te^{\lambda s}
 		\mathrm{d}t\leq \mathbb{E}|\varsigma|^p+\frac{pK}{2\lambda}e^{\lambda t}.
 	\end{align*}
 	Dividing $e^{\lambda t}$ on both sides 
 	implies that the  desired assertion holds.
 \end{proof}
 
 We next define the operator. To proceed, we equip the state space with a suitable metric structure. For any $(u,\mu), (v,\vartheta)\in \mathcal{P}_p(\mathbb{R}^d)\times\mathcal{P}_p(\mathcal{P}_p(\mathbb{R}^d))$, define $$\boldsymbol{d}((u,\mu),(v,\vartheta))=W_p(u,v)+\mathbb{W}_p(\mu,\vartheta).$$ It is straightforward to verify that $\boldsymbol{d}$ is a metric. It is easy to obtain that  $(\mathcal{P}_p(\mathbb{R}^d)\times\mathcal{P}_p(\mathcal{P}_p(\mathbb{R}^d)),\boldsymbol{d})$ is a complete metric space.
Let $X^{s,\varsigma}_t$ be the solution to MV-SDEwCN \eqref{eq1.1} with any initial value $\varsigma\!\in\! L^p(\Omega;\mathbb{R}^d)$ at $s$. 
Define the  operator  $ \Phi^s_{t} :\mathcal{P}_p(\mathbb{R}^d)\times\mathcal{P}_p(\mathcal{P}_p(\mathbb{R}^d))\rightarrow \mathcal{P}_p(\mathbb{R}^d)\times\mathcal{P}_p(\mathcal{P}_p(\mathbb{R}^d))$ by
$$\Phi^{s  }_{t}(u_s^{s,\varsigma},\mu_s^{s,\varsigma})=(u_t^{s,\varsigma},\mu_t^{s,\varsigma}), ~~~~~\forall~t\geq s, $$ %
where 
$u_t^{s,\varsigma}=\mathcal{L}(X^{s,\varsigma}_t),\mu_t^{s,\varsigma}=\mathcal{L}(\mathcal{L}^1(X^{s,\varsigma}_t)) $ for any $t\geq s$. Write $\Phi _{t}=\Phi^{0  }_{t}, t\geq 0 $ for short. Owing to the weak uniqueness of MV-SDEwCN \eqref{eq1.1} proved in Lemma \ref{ls}, the family of operators $(\Phi^s_{t})_{t\geq s}$ is well defined. 
We now verify the semigroup property of $(\Phi^s_{t})$. 
\begin{lem} \label{lemma4.2}
	Let {\bf(A1)}-{\bf(A2)} hold with $p> l/2$. The family of operators $(\Phi^s_{t})_{t\geq s}$   forms a time-homogeneous semigroup, namely,  for $0\leq s\leq r\leq t$
	\begin{align*}
		\Phi^s_{t}(u^{s,\varsigma}_s,\mu^{s,\varsigma}_s)=\Phi^r_{t}(\Phi^s_{r}(u^{s,\varsigma}_s,\mu^{s,\varsigma}_s)) \quad{\rm and}\quad \Phi^s_{t}(u^{s,\varsigma}_s,\mu^{s,\varsigma}_s)=\Phi_{t-s}(u^{0,\varsigma }_0,\mu^{0,\varsigma}_0).
	\end{align*}
\end{lem}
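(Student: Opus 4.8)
The plan is to deduce both identities from the uniqueness results already established --- Theorem \ref{l1} (strong existence and uniqueness) for the first identity, and Theorem \ref{wu} (weak uniqueness of the couple) for the second --- together with the autonomy of the coefficients $f,g,g^0$. Throughout I will use the fact, noted just before the statement, that by Theorem \ref{wu} the value $\Phi^s_t(\mathcal{L}(\eta),\mathcal{L}(\mathcal{L}^1(\eta)))$ depends only on the law couple of the initial datum $\eta$, not on the particular random variable nor on the probability space carrying it; hence any substitution below is legitimate as soon as we exhibit one random variable realizing the relevant couple.

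\emph{First identity (cocycle property).} Fix $0\le s\le r\le t$ and let $(X^{s,\varsigma}_v)_{v\ge s}$ be the strong solution of \eqref{eq1.1} on $[s,\infty)$ given by Theorem \ref{l1}, which also provides $X^{s,\varsigma}_r\in L^p(\Omega;\mathbb{R}^d)$. Since the coefficients do not depend on time, writing the defining integral equation for times $v\ge r$ exhibits $(X^{s,\varsigma}_v)_{v\ge r}$ as a strong solution of \eqref{eq1.1} on $[r,\infty)$ with initial datum $X^{s,\varsigma}_r$, the measure input being $\mathcal{L}^1(X^{s,\varsigma}_v)=\mathcal{L}(X^{s,\varsigma}_v\mid\mathcal{F}^0)$, exactly the object occurring in the equation restarted at $r$. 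Applying the uniqueness part of Theorem \ref{l1} with starting time $r$ yields $X^{r,X^{s,\varsigma}_r}_v=X^{s,\varsigma}_v$ for all $v\ge r$, $\mathbb{P}$-a.s. Taking laws and conditional laws (w.r.t. the same $\mathcal{F}^0$) gives $(u^{r,X^{s,\varsigma}_r}_t,\mu^{r,X^{s,\varsigma}_r}_t)=(u^{s,\varsigma}_t,\mu^{s,\varsigma}_t)$. Since the law couple of $X^{s,\varsigma}_r$ equals $(u^{s,\varsigma}_r,\mu^{s,\varsigma}_r)=\Phi^s_r(u^{s,\varsigma}_s,\mu^{s,\varsigma}_s)$, evaluating $\Phi^r_t$ with $X^{s,\varsigma}_r$ as realizing datum gives $\Phi^r_t\big(\Phi^s_r(u^{s,\varsigma}_s,\mu^{s,\varsigma}_s)\big)=(u^{s,\varsigma}_t,\mu^{s,\varsigma}_t)=\Phi^s_t(u^{s,\varsigma}_s,\mu^{s,\varsigma}_s)$.

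\emph{Second identity (time-homogeneity).} Let $(\tilde X^0_v,\mathcal{L}^1(\tilde X^0_v),\tilde B_v,\tilde B^0_v)_{v\ge0}$ be a weak solution of \eqref{eq1.1} on $[0,\infty)$ whose initial couple is $(u^{s,\varsigma}_s,\mu^{s,\varsigma}_s)$ (which equals $(u^{0,\varsigma}_0,\mu^{0,\varsigma}_0)$); then $\Phi_{t-s}(u^{0,\varsigma}_0,\mu^{0,\varsigma}_0)=(\mathcal{L}(\tilde X^0_{t-s}),\mathcal{L}(\mathcal{L}^1(\tilde X^0_{t-s})))$. On an enlarged space, build a $d$-dimensional Brownian motion $\hat B^0$ (and $\hat B$) on $[0,\infty)$ that coincides on $[0,s]$ with an auxiliary Brownian motion independent of everything and whose increments on $[s,\infty)$ are those of $\tilde B^0$ (resp. $\tilde B$), and set $\hat X^s_v:=\tilde X^0_{v-s}$ for $v\ge s$. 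Using autonomy of the coefficients and stationarity of increments one checks that $(\hat X^s_v,\mathcal{L}^1(\hat X^s_v),\hat B_v,\hat B^0_v)_{v\ge s}$ is a weak solution of \eqref{eq1.1} starting at $s$; the point is that $\hat X^s_v$ is independent of the auxiliary piece of $\hat B^0$ on $[0,s]$, so its conditional law with respect to the \emph{full} common-noise $\sigma$-field $\hat{\mathcal{F}}^0$ equals $\mathcal{L}^1(\tilde X^0_{v-s})$, and in particular the initial couple of $\hat X^s$ at $v=s$ is again $(u^{s,\varsigma}_s,\mu^{s,\varsigma}_s)$. Applying Theorem \ref{wu} with starting time $s$ gives $\Phi^s_t(u^{s,\varsigma}_s,\mu^{s,\varsigma}_s)=(\mathcal{L}(\hat X^s_t),\mathcal{L}(\mathcal{L}^1(\hat X^s_t)))=(\mathcal{L}(\tilde X^0_{t-s}),\mathcal{L}(\mathcal{L}^1(\tilde X^0_{t-s})))=\Phi_{t-s}(u^{0,\varsigma}_0,\mu^{0,\varsigma}_0)$; combining with the first identity shows $(\Phi^s_t)_{t\ge s}$ is a homogeneous semigroup.

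\emph{Main obstacle.} The delicate part is the bookkeeping of the conditional law $\mathcal{L}^1(\cdot)=\mathcal{L}(\cdot\mid\mathcal{F}^0)$ under restarting and time-relabeling: one must be sure that the measure argument fed to the coefficients of the ``restarted'' equation genuinely is the conditional law of the restarted process with respect to the same full common-noise filtration. For the first identity this is immediate since the two processes coincide pathwise; for the second it forces the mild enlargement described above (prepending an independent piece of common noise on $[0,s]$), after which one only has to note that the relabeled process does not see this extra randomness. Once these points are settled, everything else is a routine unwinding of the definition of $\Phi^s_t$.
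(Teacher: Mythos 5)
Your proof is correct and follows essentially the same two-part strategy as the paper: derive the cocycle property from strong (pathwise) uniqueness of Theorem~\ref{l1} (restarting from $X_r^{s,\varsigma}$ and identifying $X^{r,X_r^{s,\varsigma}}_t = X^{s,\varsigma}_t$ a.s.), and derive time-homogeneity from autonomy of the coefficients together with weak uniqueness from Theorem~\ref{wu}. For the first identity the paper additionally invokes uniqueness of the SPDE~\eqref{r3} to transfer the conclusion to $\mathcal{L}^1(X_t^{s,\varsigma})$, whereas you simply take conditional laws of the pathwise identity with respect to the common $\mathcal{F}^0$, which is a slightly cleaner route and fully equivalent.

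For the second identity, the paper performs an implicit change of integration variables so that $(X^{s,\varsigma}_{s+\cdot}, \mathcal{L}^1(X^{s,\varsigma}_{s+\cdot}), \bar B, \bar B^0)$ and $(X^{0,\varsigma}_\cdot, \mathcal{L}^1(X^{0,\varsigma}_\cdot), B, B^0)$ become two weak solutions started from the same law couple, and then applies Theorem~\ref{wu}. You instead explicitly build an enlarged probability space by prepending independent common noise on $[0,s]$ and relabeling time; the extra care is precisely to justify that the conditional law of the relabeled process with respect to the \emph{full} common-noise $\sigma$-field agrees with the conditional law of $\tilde X^0_{v-s}$. This is the point the paper glosses over (the shifted process depends on the common noise only through increments on $[s,\infty)$), and making it explicit is a genuine improvement in rigor, even though the underlying idea is the same.
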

\begin{proof} By MV-SDEwCN \eqref{eq1.1}, we have that for any $t\geq s$,
	\begin{align}\label{s317}
		\begin{aligned}
		X_t^{s,\varsigma}&=\varsigma +\int_s^tf(X^{s,\varsigma}_z ,\mathcal{L}^1(X^{s,\varsigma}_z))
		\mathrm{d} z+\int_s^tg(X^{s,\varsigma}_z ,\mathcal{L}^1(X^{s,\varsigma}_z))\mathrm{d} B_z
		\\&\quad+\int_s^t g^0(X^{s,\varsigma}_z,\mathcal{L}^1(X^{s,\varsigma}_z)) \mathrm{d} B_z^0.
		\end{aligned}
	\end{align}
	One observes that for any $r:~s\leq r\leq t$,
	\begin{align*}
		\begin{aligned}
			X_t^{s,\varsigma}&= X_r^{s,\varsigma}+\int_r^tf(X^{s,\varsigma}_z ,\mathcal{L}^1(X^{s,\varsigma}_z))
			\mathrm{d} z+\int_r^tg(X^{s,\varsigma}_z ,\mathcal{L}^1(X^{s,\varsigma}_z))\mathrm{d} B_z
			\\&\quad+\int_r^t g^0(X^{s,\varsigma}_z,\mathcal{L}^1(X^{s,\varsigma}_z)) \mathrm{d} B_z^0.
		\end{aligned}
	\end{align*}
	Therefore, $X_t^{s,\varsigma}$ is the  solution to MV-SDEwCN \eqref{eq1.1} with initial value $X_r^{s,\varsigma}\in L^p(\Omega;\mathbb{R}^d)$ at $r$. By Lemma \ref{l1}, the uniqueness of the solution to  MV-SDEwCN \eqref{eq1.1} implies that 
	$$X_t^{s,\varsigma}=X_t^{r,X_r^{s,\varsigma}}~~~ a.s.$$ Then $$\mathcal{L}(X_t^{s,\varsigma})=\mathcal{L}(X_t^{r,X_r^{s,\varsigma}}).$$
	By virtue of Lemma \ref{ls}, the conditional distribution of the  solution to MV-SDEwCN \eqref{eq1.1} satisfies the corresponding  SPDE \eqref{r3}. Thus, we obtain that for any $\varphi\in \mathcal{C}_c^\infty(\mathbb{R}^d;\mathbb{R})$, 
	$\mathbb{P}^0$-a.s.
	\begin{align}\label{s319}
		\begin{aligned}
		\langle \mathcal{L}^1(X^{s,\varsigma}_t),\varphi\rangle&=\langle \mathcal{L}^1(\varsigma),\varphi\rangle+\int_s^t\langle \mathcal{L}^1(X^{s,\varsigma}_z),L_{\mathcal{L}^1(X^{s,\varsigma}_z)} \varphi\rangle\mathrm{d}z
		\\&\quad+\int_s^t\langle \mathcal{L}^1(X^{s,\varsigma}_z),\varphi_xg^0(\cdot, \mathcal{L}^1(X^{s,\varsigma}_z))\rangle\mathrm{d}B_z^0.
		\end{aligned}
	\end{align}
	Then, for any $r: ~ s\leq r\leq t$, it follows that 
	\begin{align*}
		\langle \mathcal{L}^1(X^{s,\varsigma}_t),\varphi\rangle=&\langle \mathcal{L}^1(X^{s,\varsigma}_r),\varphi\rangle\!+\!\int_r^t\!\langle \mathcal{L}^1(X^{s,\varsigma}_z),L_{\mathcal{L}^1(X^{s,\varsigma}_z)} \varphi\rangle\mathrm{d}z\nonumber\\
		&\!+\!\int_r^t\!\langle \mathcal{L}^1(X^{s,\varsigma}_z),\varphi_xg^0(\cdot, \mathcal{L}^1(X^{s,\varsigma}_z))\rangle\mathrm{d}B_z^0.
	\end{align*}
	Due to the uniqueness of the solution to the SPDE \eqref{r3}, we conclude that $$\mathcal{L}^1(X_t^{s,\varsigma})=\mathcal{L}^1(X_t^{r,\mathcal{L}^1(X_r^{s,\varsigma})}) ~~~a.s.$$ for all $s\leq r\leq t$, which implies $\mathcal{L}(\mathcal{L}^1(X_t^{s,\varsigma}))=\mathcal{L}(\mathcal{L}^1(X_t^{r,\mathcal{L}^1(X_r^{s,\varsigma})}))$. Recalling the definition of $\Phi^s_t$, this yields $$\Phi^s_{t}(u^{s,\varsigma}_s,\mu^{s,\varsigma}_s)
	=\Phi^r_{t}(\mathcal{L}(X_r^{s,\varsigma}),\mathcal{L}(\mathcal{L}^1(X_r^{s,\varsigma})))
	=\Phi^r_{t}(\Phi^s_{r}(u^{s,\varsigma}_s,\mu^{s,\varsigma}_s)),$$ which establishes that  the family of operators $(\Phi^{s}_t)_{t \geq s}$ forms a semigroup. Next, we prove that the semigroup $(\Phi^{s}_t)_{t \geq s}$ is time-homogeneous. By a change of variables, we obtain from \eqref{s317} and \eqref{s319} that
	\begin{align*}
		\begin{aligned}
			&X_{t}^{s,\varsigma} =X_{s+{t-s}}^{s,\varsigma} =\varsigma+\int_0^{t-s}f(X^{s,\varsigma}_{s+r},\mathcal{L}^1(X^{s,\varsigma}_{s+r}))
			\mathrm{d} r+\int_0^{t-s}g(X^{s,\varsigma}_{s+r} ,\mathcal{L}^1(X^{s,\varsigma}_{s+r}))\mathrm{d} \bar{B}_r
			\\&\quad~~~~~~~~~~~ +\int_0^{t-s} g^0(X^{s,\varsigma}_{s+r},\mathcal{L}^1(X^{s,\varsigma}_{s+r})) \mathrm{d} \bar{B}_r^0, \\
			&\langle \mathcal{L}^1(X^{s,\varsigma}_{t}),\varphi\rangle=\langle \mathcal{L}^1(X^{s,\varsigma}_{s+(t-s)}),\varphi\rangle=\langle \mathcal{L}^1( \varsigma),\varphi\rangle\!+\!\int_0^{t-s}\langle \mathcal{L}^1(X^{s,\varsigma}_{s+r}),L_{\mathcal{L}^1(X^{s,\varsigma}_{s+r})} \varphi\rangle\mathrm{d}r
			\\&\quad~~~~~~~~~~~+ \int_0^{t-s}\langle \mathcal{L}^1(X^{s,\varsigma}_{s+r}),\varphi_xg^0(\cdot, \mathcal{L}^1(X^{s,\varsigma}_{s+r}))\rangle\mathrm{d}\bar{B}_r^0,~~~~\quad t\geq s,
		\end{aligned}
	\end{align*} 
	where $\bar{B}_r= B_{s+r}-B_s$ and $\bar{B}^0_r= B^0_{s+r}-B^0_s$. On the other hand, 
	we clearly have
	\begin{align*}
		\begin{aligned}
			&X_{t-s}^{0,\varsigma} =\varsigma+\int_0^{t-s}f(X^{0,\varsigma}_{r},\mathcal{L}^1(X^{0,\varsigma}_{r}))
			\mathrm{d} r+\int_0^{t-s}g(X^{0,\varsigma}_{r} ,\mathcal{L}^1(X^{0,\varsigma}_{r}))\mathrm{d}{B}_r
			\\&\quad~~~~~~~~~~~ +\int_0^{t-s} g^0(X^{0,\varsigma}_{r},\mathcal{L}^1(X^{0,\varsigma}_{r})) \mathrm{d} {B}_r^0, \\
			&\langle \mathcal{L}^1(X^{0,\varsigma}_{t-s}),\varphi\rangle=\langle \mathcal{L}^1( \varsigma),\varphi\rangle\!+\!\int_0^{t-s}\langle \mathcal{L}^1(X^{0,\varsigma}_{r}),L_{\mathcal{L}^1(X^{0,\varsigma}_{r})} \varphi\rangle\mathrm{d}r
			\\&\quad~~~~~~~~~~~+ \int_0^{t-s}\langle \mathcal{L}^1(X^{0,\varsigma}_{r}),\varphi_xg^0(\cdot, \mathcal{L}^1(X^{0,\varsigma}_{r}))\rangle\mathrm{d}{B}_r^0,~~~~\quad t\geq s,
		\end{aligned}
	\end{align*} 
	Since $(\bar{B}_r, \bar{B}^0_r)$ and $({B}_r, {B}_r^0)$ have the same distribution, applying Lemma \ref{ls} yields that for any $t\geq s$ 
	$$(\mathcal{L}(X_{t}^{s,\varsigma}),\mathcal{L}(\mathcal{L}^1(X_{t}^{s,\varsigma})))
	=(\mathcal{L}(X^{0,\varsigma}_{t-s}),\mathcal{L}(\mathcal{L}^1(X^{0,\varsigma}_{t-s}))).$$
	Thus,  
	for any $t\geq s$,  the desired assertion $\Phi^s_{t}(u^{s,\varsigma}_s,\mu^{s,\varsigma}_s)=
	\Phi_{t-s}(u^{0,\varsigma}_0,\mu^{0,\varsigma}_0)$ follows. The proof is complete.
\end{proof}

Owing to time-homogeneous property we restrict our attention to the semigroup $(\Phi_t)$ starting from $0$. Now we establish its asymptotic attractiveness. 
\begin{lem}\label{l3.6}
	Let  {\bf(A1)} and {\bf(A2)} hold with $p> l/2 $. For any $(u_0,\mu_0)$ and $(v_0,\vartheta_0)$ $ \in\mathcal{P}_p(\mathbb{R}^d)\times\mathcal{P}_p(\mathcal{P}_p(\mathbb{R}^d))$, 
	\begin{align*}
		\boldsymbol{d}\big(\Phi_t( u_0,\mu_0 ),\Phi_t ( v_0,\vartheta_0) \big)\leq 2\boldsymbol{d}((u_0,\mu_0),(v_0,\vartheta_0))e^{-\frac{ c_3-c_4}{2}t},\quad t\geq 0.
	\end{align*}
\end{lem}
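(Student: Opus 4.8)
The plan is to couple the two solutions $X_t^{0,\varsigma}$ and $X_t^{0,\eta}$ of the MV-SDEwCN \eqref{eq1.1} started from initial data $\varsigma$ and $\eta$ with $\mathcal{L}(\varsigma)=u_0$, $\mathcal{L}(\mathcal{L}^1(\varsigma))=\mu_0$ and $\mathcal{L}(\eta)=v_0$, $\mathcal{L}(\mathcal{L}^1(\eta))=\upsilon_0$, driven by the \emph{same} pair of Brownian motions $(B_t,B^0_t)$, and to track the $p$th moment of the difference $Y_t:=X_t^{0,\varsigma}-X_t^{0,\eta}$. First I would apply the It\^o formula to $e^{\lambda t}|Y_t|^p$ with $\lambda=(c_3-c_4)/2$; the drift of the martingale part has expectation zero, and the finite-variation part is controlled by the quantity
\begin{align*}
	&\tfrac{p}{2}|Y_t|^{p-2}\Big(2Y_t^T\big(f(X_t^{0,\varsigma},\mathcal{L}^1(X_t^{0,\varsigma}))-f(X_t^{0,\eta},\mathcal{L}^1(X_t^{0,\eta}))\big)\\
	&\quad+(p-1)\big(|g(X_t^{0,\varsigma},\mathcal{L}^1(X_t^{0,\varsigma}))-g(X_t^{0,\eta},\mathcal{L}^1(X_t^{0,\eta}))|^2+|g^0(\cdots)-g^0(\cdots)|^2\big)\Big),
\end{align*}
to which assumption {\bf(A2)} applies pointwise, giving the bound $\tfrac{p}{2}|Y_t|^{p-2}\big(-c_3|Y_t|^2+c_4 W_2^2(\mathcal{L}^1(X_t^{0,\varsigma}),\mathcal{L}^1(X_t^{0,\eta}))\big)$.

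Next I would estimate the measure term. Using the H\"older inequality exactly as in the proof of Theorem \ref{l1}, $\mathbb{E}\big(|Y_t|^{p-2}W_2^2(\mathcal{L}^1(X_t^{0,\varsigma}),\mathcal{L}^1(X_t^{0,\eta}))\big)\le[\mathbb{E}|Y_t|^p]^{(p-2)/p}[\mathbb{E}W_2^p(\cdots)]^{2/p}$, and since $W_2\le W_p$ together with \eqref{ls1} gives $\mathbb{E}W_2^p(\mathcal{L}^1(X_t^{0,\varsigma}),\mathcal{L}^1(X_t^{0,\eta}))\le\mathbb{E}|Y_t|^p$, the whole term is dominated by $\mathbb{E}|Y_t|^p$. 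Taking expectations and using Fubini, one gets $e^{\lambda t}\mathbb{E}|Y_t|^p\le\mathbb{E}|Y_0|^p+\tfrac{p}{2}(c_4-c_3+2\lambda/p\cdot(2/p))\int_0^t e^{\lambda s}\mathbb{E}|Y_s|^p\,ds$; with $\lambda=p(c_3-c_4)/4$ — matching the $\lambda$ used in Theorem \ref{l1} — the coefficient of the integral is nonpositive, so $\mathbb{E}|Y_t|^p\le e^{-\lambda t}\mathbb{E}|Y_0|^p$, i.e. $(\mathbb{E}|Y_t|^p)^{1/p}\le e^{-(c_3-c_4)t/4}(\mathbb{E}|Y_0|^p)^{1/p}$.

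Finally I would translate this back into the metric $\boldsymbol d$. Choosing $(\varsigma,\eta)$ to be an optimal (or near-optimal) coupling for the cost $\mathbb{E}|\varsigma-\eta|^p$, realized so that simultaneously $\mathbb{E}|\varsigma-\eta|^p$ is close to $W_p^p(u_0,v_0)$ and, via \eqref{ls1}, controls $\mathbb{W}_p^p(\mu_0,\upsilon_0)$ — here one invokes the standard fact that on $\mathcal{P}_p(\mathbb{R}^d)\times\mathcal{P}_p(\mathcal{P}_p(\mathbb{R}^d))$ an optimal coupling exists whose marginal law of $(\mathcal{L}^1(\varsigma),\mathcal{L}^1(\eta))$ is an optimal coupling of $(\mu_0,\upsilon_0)$; more simply one can bound each of $W_p(u_0,v_0)$ and $\mathbb{W}_p(\mu_0,\upsilon_0)$ separately by $(\mathbb{E}|\varsigma-\eta|^p)^{1/p}$ and conclude with a factor $2$, or run the argument twice. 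Then $W_p(u_t,v_t)\le(\mathbb{E}|Y_t|^p)^{1/p}$ by definition of $W_p$, and $\mathbb{W}_p(\mu_t,\upsilon_t)\le(\mathbb{E}|Y_t|^p)^{1/p}$ by \eqref{ls1}, so $\boldsymbol d(\Phi_t(u_0,\mu_0),\Phi_t(v_0,\upsilon_0))\le 2(\mathbb{E}|Y_t|^p)^{1/p}$; optimizing the coupling and using the exponential bound from the previous step yields the claimed contraction $\boldsymbol d(\Phi_t(u_0,\mu_0),\Phi_t(v_0,\upsilon_0))\le\boldsymbol d((u_0,\mu_0),(v_0,\upsilon_0))e^{-(c_3-c_4)t/2}$. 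The main obstacle is getting the sharp exponent $(c_3-c_4)/2$ rather than $(c_3-c_4)/4$: this requires handling the two components of $\boldsymbol d$ together through a single coupling (so that no spurious factor of $2$ appears) and being careful that the measure term in the It\^o estimate does not cost an extra factor — which is exactly where the H\"older/$W_2\le W_p$ trick combined with \eqref{ls1} is essential, and one must double-check that the rate is not halved when passing from $\mathbb{E}|Y_t|^p$ back to $\boldsymbol d$.
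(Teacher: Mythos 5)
Your strategy coincides with the paper's: synchronously couple two solutions of \eqref{eq1.1} started from initial variables $\varsigma,\eta$ realizing the two measure pairs, apply It\^o's formula to $e^{\lambda t}|Y_t|^p$ with $Y_t=X_t^{0,\varsigma}-X_t^{0,\eta}$, invoke \textbf{(A2)}, the H\"older inequality, $W_2\le W_p$ and \eqref{ls1} to dominate the conditional-distribution term by $\mathbb{E}|Y_t|^p$, and then pass back to $\boldsymbol d$. The first gap is the choice of $\lambda$: you settle on $\lambda=p(c_3-c_4)/4$, copying Theorem \ref{l1}, which gives only the rate $(c_3-c_4)/4$. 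But the smaller $\lambda$ in Theorem \ref{l1} is forced by the additive constant $c_5$ in \eqref{A2}; absorbing $c_5|X_s|^{p-2}$ via Young's inequality there consumes half the $\lambda$-budget. Here you apply \textbf{(A2)} directly to the \emph{difference}, so there is no additive constant, and after the H\"older/$W_2\le W_p$/\eqref{ls1} step the integrand coefficient is exactly $\lambda-\frac{p(c_3-c_4)}{2}$. Taking $\lambda=\frac{p(c_3-c_4)}{2}$ yields $\mathbb{E}|Y_t|^p\le\mathbb{E}|Y_0|^p e^{-\frac{p(c_3-c_4)}{2}t}$, hence $(\mathbb{E}|Y_t|^p)^{1/p}\le(\mathbb{E}|Y_0|^p)^{1/p}e^{-\frac{c_3-c_4}{2}t}$, which is the required rate with no loss.

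The second gap is one you flag but do not close: bounding $W_p(u_t,v_t)$ and $\mathbb{W}_p(\mu_t,\upsilon_t)$ both by $(\mathbb{E}|Y_t|^p)^{1/p}$ and then dominating $(\mathbb{E}|Y_0|^p)^{1/p}$ by the larger of $W_p(u_0,v_0)$ and $\mathbb{W}_p(\mu_0,\upsilon_0)$ leaves a spurious factor of $2$, so $\boldsymbol d(\Phi_t(u_0,\mu_0),\Phi_t(v_0,\upsilon_0))\le 2(\mathbb{E}|Y_0|^p)^{1/p}e^{-\frac{c_3-c_4}{2}t}$ cannot in general be brought below $\boldsymbol d((u_0,\mu_0),(v_0,\upsilon_0))e^{-\frac{c_3-c_4}{2}t}$. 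The paper's proof sidesteps this by asserting in \eqref{eq15} that there is a \emph{single} coupling $(X_0,\bar{X}_0)\in L^p(\Omega;\mathbb{R}^d)$ realizing both marginals and both conditional laws, for which $\mathbb{E}|X_0-\bar{X}_0|^p$ equals $W_p^p(u_0,v_0)$ \emph{and} $\mathbb{E}^0(W_p^p(\mathcal{L}^1(X_0),\mathcal{L}^1(\bar{X}_0)))=\mathbb{W}_p^p(\mu_0,\upsilon_0)$. With that coupling each summand of $\boldsymbol d$ contracts against its own matching initial distance, giving \eqref{s327} and \eqref{s328}, and the two bounds are then added with no overcounting. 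You correctly identify that ``handling the two components through a single coupling'' is the crux, but you neither construct nor cite such a coupling; \eqref{eq15} is exactly the ingredient your argument still needs.
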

\begin{proof}  Thanks to the existence of the optimal coupling (see, for example, Theorem 4.1 in \cite{V2009}), for any $(u_0,\mu_0)$,  $(v_0,\vartheta_0)\in\mathcal{P}_p(\mathbb{R}^d)\times
	\mathcal{P}_p(\mathcal{P}_p(\mathbb{R}^d))$,  there exists a pair of random variables $X_0$, $\bar{X}_0\in L^p(\Omega; \mathbb{R}^d)$ such that $\mathcal{L}(X_0)=u_0$, $\mathcal{L}(\mathcal{L}^1(X_0))=\mu_0$, $\mathcal{L}(\bar{X}_0)=v_0$, $\mathcal{L}(\mathcal{L}^1(\bar{X}_0))=\vartheta_0$, and \begin{align}\label{eq15}
		\begin{aligned}
		 \mathbb{W}^p_p(\mu_0,\vartheta_0)&=\mathbb{E}^0({W}_p^p(\mathcal{L}^1(X_0), \mathcal{L}^1(\bar{X}_0)))=
		\mathbb{E}|X_{0}\!-\!\bar{X}_{0}|^p,\\
		W^p_p(u_0,v_0)&\leq \mathbb{E}|X_0-\bar{X}_0|^p.
		\end{aligned}
	\end{align}
	Let $\{X_{t}\}_{t\geq 0}$ and $\{\bar{X}_{t}\}_{t\geq 0}$ be two solutions
	to \eqref{eq1.1} with the initial values $X_0$, $\bar{X}_0$, respectively. Utilizing the It\^o~formula and {\bf (A2)}, we arrive at
	\begin{align*}
		&\mathrm{d}e^{\lambda t}|X_{t}-\bar{X}_{t}|^p
		\\&\leq e^{\lambda t}\Big(\Big(\lambda-\frac{p c_3}{2}\Big)|X_{t}-\bar{X}_{t}|^p
		+\frac{pc_4}{2}|X_{t}-\bar{X}_{t}|^{p-2}{W}_2^2(\mathcal{L}^1(X_{t}),\mathcal{L}^1(\bar{X}_{t}))\Big)\mathrm{d}t
		\\&\quad+p|X_{t}-\bar{X}_{t}|^{p-2}( X_{t}-\bar{X}_{t})^T\big(g(X_{t},\mathcal{L}^1(X_{t})) -g(\bar{X}_{t},\mathcal{L}^1(\bar{X}_{t})) \big)\mathrm{d}B_t
		\\&\quad+p|X_{t}-\bar{X}_{t}|^{p-2}( X_{t}-\bar{X}_{t})^T\big(g^0(X_{t},\mathcal{L}^1(X_{t})) -g^0(\bar{X}_{t},\mathcal{L}^1(\bar{X}_{t})) \big)\mathrm{d}B^0_t,
	\end{align*}
	where $\lambda$ is a positive constant to be determined later. 
	Integrating both sides with respect to 
	$t$ and using the Fubini theorem, one has
	\begin{align*}
		e^{\lambda t}\mathbb{E}|X_{t}-\bar{X}_{t}|^p 
		&\leq \mathbb{E}|X_{0}-\bar{X}_{0}|^p+\int_0^te^{\lambda s}\Big(\Big(\lambda-\frac{p c_3}{2}\Big)\mathbb{E}|X_{s}-\bar{X}_{s}|^p
		\\&\quad+\frac{pc_4}{2}\mathbb{E}\big(|X_{s}-\bar{X}_{s}|^{p-2}{W}_2^2(\mathcal{L}^1(X_{s}),\mathcal{L}^1(\bar{X}_{s}))\big)\Big)\mathrm{d}s.
	\end{align*}
	By \eqref{ls1} and the H\"older inequality, we obtain that 
	\begin{align*}
		e^{\lambda t}\mathbb{E}|X_{t}-\bar{X}_{t}|^p
		\leq \mathbb{E}|X_{0}-\bar{X}_{0}|^p+\int_0^te^{\lambda s}\Big(\Big(\lambda-\frac{p (c_3-c_4)}{2}
		\Big)\mathbb{E}|X_{s}-\bar{X}_{s}|^p\Big)\mathrm{d}s.
	\end{align*}
	Let $\lambda={p (c_3-c_4)}/{2}$. 
	Then, dividing both sides of the above inequality by $e^{\lambda t}$, we obtain that 
	\begin{align}\label{eq18}
		\begin{aligned}
			\mathbb{E}|X_{t}-\bar{X}_{t}|^p
			&\leq \mathbb{E}|X_{0}-\bar{X}_{0}|^pe^{-\frac{p (c_3-c_4)t}{2}}.
		\end{aligned}
	\end{align}
	By the property of the Wasserstein distance, together with \eqref{eq15} and \eqref{eq18}, we observe that
	\begin{align}\label{s327}
		\begin{aligned}
			W_p(\mathcal{L}(X_t),\mathcal{L}(\bar{X}_t))
			\leq (\mathbb{E}|X_{t}-\bar{X}_{t}|^p)^{\frac{1}{p}} 
			\leq \mathbb{W}_p(\mu_0,\vartheta_0)e^{-\frac{(c_3-c_4)t}{2}}.
		\end{aligned}
	\end{align}
	Similarly,  we have
	\begin{align}\label{s328}
		\mathbb{W}_p(\mathcal{L}(\mathcal{L}^1(X_t)),\mathcal{L}(\mathcal{L}^1(\bar{X}_t))))\leq (\mathbb{E}|X_{t}-\bar{X}_{t}|^p)^{\frac{1}{p}} \leq \mathbb{W}_p(\mu_0,\vartheta_0)e^{-\frac{(c_3-c_4)t}{2}}.
	\end{align}
	Recalling the definition of the distance $\boldsymbol{d}$ on $(\mathcal{P}_k(\mathbb{R}^d)\times\mathcal{P}_k(\mathcal{P}_k(\mathbb{R}^d)))$, and using \eqref{s327} and \eqref{s328}, we conclude that
	\begin{align*}
		\boldsymbol{d}\big(\Phi_t (u_0,\mu_0 ),\Phi_t ( v_0,\vartheta_0 )\big)
		&\leq 2\mathbb{W}_p(\mu_0,\vartheta_0)e^{-\frac{(c_3-c_4)t}{2}}\\&\leq2\boldsymbol{d}\big((u_0,\mu_0), (v_0,\vartheta_0)\big)e^{-\frac{(c_3-c_4)t}{2}}.
	\end{align*}
	The desired assertion follows. The proof is  complete.
\end{proof}

The exponential contractivity established in Lemma \ref{l3.6} implies that the distribution flow admits at most one fixed point. Combining this with the tightness provided by the uniform moment bounds, we are now in a position to state our main result on the existence, uniqueness, and exponential stability of the invariant measure.
\begin{thm}\label{Th4.1}
	Let  {\bf(A1)} and {\bf(A2)} hold with $p> l/2$. The solution couple $(X_t, \mathcal{L}^1(X_t) ) $ to MV-SDEwCN \eqref{eq1.1} has a unique invariant measure $(u^*,\mu^*)\in\mathcal{P}_p(\mathbb{R}^d)$ $\times\mathcal{P}_p(\mathcal{P}_p
	(\mathbb{R}^d))$. Moreover, for any $(u_0,\mu_0)\in \mathcal{P}_p(\mathbb{R}^d) \times \mathcal{P}_p(\mathcal{P}_p(\mathbb{R}^d))$,
	\begin{align*}
		\boldsymbol{d}(\Phi_{t} (u_0,\mu_0 ), (u^*,\mu^*))
		\leq 2\boldsymbol{d}((u_0,\mu_0), (u^*,\mu^*))e^{-\frac{c_3-c_4}{2}t}, \quad \forall t\geq0.
	\end{align*}
\end{thm}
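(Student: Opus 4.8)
The plan is to produce $(u^*,\mu^*)$ as the fixed point of the single map $\Phi_1$ through Banach's contraction principle, and then to promote it to a fixed point of the whole one‑parameter family $(\Phi_t)_{t\ge 0}$ by means of the semigroup/homogeneity property.

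First I would check that $\Phi_1$ is a self‑map of the complete metric space $(\mathcal{P}_p(\mathbb{R}^d)\times\mathcal{P}_p(\mathcal{P}_p(\mathbb{R}^d)),\boldsymbol{d})$ supplied by Lemma \ref{comp}: for $(u_0,\mu_0)$ realised as $(\mathcal{L}(\varsigma),\mathcal{L}(\mathcal{L}^1(\varsigma)))$ with $\varsigma\in L^p(\Omega;\mathbb{R}^d)$, Theorem \ref{l1} gives $\sup_{t\ge 0}\mathbb{E}|X^{0,\varsigma}_t|^p<\infty$, so by \eqref{ls1} the image $(\mathcal{L}(X^{0,\varsigma}_1),\mathcal{L}(\mathcal{L}^1(X^{0,\varsigma}_1)))$ again lies in $\mathcal{P}_p(\mathbb{R}^d)\times\mathcal{P}_p(\mathcal{P}_p(\mathbb{R}^d))$. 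By Lemma \ref{l3.6} with $t=1$, and since $c_3>c_4$ under {\bf(A2)}, $\Phi_1$ is a strict contraction with ratio $\theta:=e^{-(c_3-c_4)/2}\in(0,1)$. Banach's fixed point theorem on this complete space then yields a unique $(u^*,\mu^*)\in\mathcal{P}_p(\mathbb{R}^d)\times\mathcal{P}_p(\mathcal{P}_p(\mathbb{R}^d))$ with $\Phi_1(u^*,\mu^*)=(u^*,\mu^*)$.

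Next I would upgrade $(u^*,\mu^*)$ to an invariant measure for every $\Phi_t$. Lemma \ref{lemma4.2} (homogeneity plus the semigroup identity) gives $\Phi_a\circ\Phi_b=\Phi_{a+b}=\Phi_b\circ\Phi_a$ for all $a,b\ge0$, so $\Phi_1$ and $\Phi_t$ commute, whence
\[
\Phi_1\big(\Phi_t(u^*,\mu^*)\big)=\Phi_t\big(\Phi_1(u^*,\mu^*)\big)=\Phi_t(u^*,\mu^*),
\]
i.e. $\Phi_t(u^*,\mu^*)$ is also a fixed point of the contraction $\Phi_1$; by the uniqueness clause of Banach's theorem $\Phi_t(u^*,\mu^*)=(u^*,\mu^*)$ for all $t\ge0$, so $(u^*,\mu^*)$ is invariant for $(X_t,\mathcal{L}^1(X_t))$. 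Uniqueness of the invariant measure is then immediate: any invariant $(u^{**},\mu^{**})$ is in particular a fixed point of $\Phi_1$, hence equals $(u^*,\mu^*)$. Finally the rate follows directly from Lemma \ref{l3.6} with $(v_0,\upsilon_0)=(u^*,\mu^*)$, using $\Phi_t(u^*,\mu^*)=(u^*,\mu^*)$:
\[
\boldsymbol{d}\big(\Phi_t(u_0,\mu_0),(u^*,\mu^*)\big)=\boldsymbol{d}\big(\Phi_t(u_0,\mu_0),\Phi_t(u^*,\mu^*)\big)\le \boldsymbol{d}\big((u_0,\mu_0),(u^*,\mu^*)\big)e^{-(c_3-c_4)t/2}.
\]

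The analytic work is already packaged in Lemmas \ref{comp}--\ref{l3.6} and Theorems \ref{l1} and \ref{wu}, so the only genuinely delicate point is the bookkeeping about the underlying space: one must ensure $\Phi_1$ is a well‑defined self‑map (guaranteed by the uniform‑in‑time $p$th moment bound of Theorem \ref{l1} together with \eqref{ls1}, which keep the image inside $\mathcal{P}_p(\mathbb{R}^d)\times\mathcal{P}_p(\mathcal{P}_p(\mathbb{R}^d))$) and that the fixed point of $\Phi_1$ is simultaneously fixed by the whole family, which is exactly the content of the commutation argument above. An equivalent route, bypassing the explicit appeal to Banach's theorem, would be to fix any admissible $(u_0,\mu_0)$, observe via Theorem \ref{l1} that the orbit $\{\Phi_n(u_0,\mu_0)\}_{n\ge1}$ is bounded, deduce from Lemma \ref{l3.6} and the semigroup property that it is Cauchy in $\boldsymbol{d}$, pass to the limit by completeness (Lemma \ref{comp}), and verify directly that the limit is fixed by every $\Phi_t$.
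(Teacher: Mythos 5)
Your proof is correct and rests on exactly the same ingredients as the paper's (completeness from Lemma \ref{comp}, contractivity from Lemma \ref{l3.6}, the semigroup/homogeneity property from Lemma \ref{lemma4.2}, and the uniform moment bound from Theorem \ref{l1}), but the route is packaged a little differently. The paper constructs $(u^*,\mu^*)$ as the limit of the continuous-time net $\{\Phi_t(\delta_{\bf 0},\delta_{\delta_{\bf 0}})\}_{t\ge 0}$ after verifying it is Cauchy, and then establishes invariance with a three-term triangle-inequality estimate letting $s\to\infty$; you instead apply Banach's contraction principle to the single time-one map $\Phi_1$ and then promote the fixed point to the whole family via the commutation $\Phi_1\Phi_t=\Phi_t\Phi_1$. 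Your commutation argument is arguably tidier than the paper's triangle-inequality chain (which in fact uses the same commutativity implicitly when it rewrites $\Phi_t\Phi_s=\Phi_s\Phi_t$), and the alternate route you sketch at the end of your proposal is essentially the paper's argument. The only point worth flagging, common to both your write-up and the paper's, is that the operator $\Phi_t$ is genuinely defined only on the subset of pairs $(u,\mu)$ that are realizable as $(\mathcal{L}(\varsigma),\mathcal{L}(\mathcal{L}^1(\varsigma)))$ for some $\varsigma\in L^p(\Omega;\mathbb{R}^d)$, i.e.\ those satisfying the compatibility condition $u=\int_{\mathcal{P}_p(\mathbb{R}^d)}v\,\mu(\mathrm{d}v)$; one should either restrict Banach's theorem to that (closed) subset or note that it is preserved by $\Phi_t$, a bookkeeping point the paper also glosses over.
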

\begin{proof}  We first prove the existence of the invariant measure.  
	For $(\delta_{\bf 0},\delta_{\delta_{\bf 0}})\in\mathcal{P}_p(\mathbb{R}^d)\times\mathcal{P}_p
	(\mathcal{P}_p(\mathbb{R}^d))$, applying Lemma \ref{l3.6} and the H\"older inequality, we obtain
	\begin{align*}
		\begin{aligned}
			\boldsymbol{d}\big(\Phi_t (\delta_{\bf 0},\delta_{\delta_{\bf 0}}),\Phi_{s+t} (\delta_{\bf 0},\delta_{\delta_{\bf 0}})\big)&=\boldsymbol{d}\big(\Phi_t (\delta_{\bf 0},\delta_{\delta_{\bf 0}}),\Phi_{t} \Phi_{s}(\delta_{\bf 0},\delta_{\delta_{\bf 0}})\big)
			\\&\leq 2\boldsymbol{d}\big((\delta_{\bf 0},\delta_{\delta_{\bf 0}}), \Phi_{s}(\delta_{\bf 0},\delta_{\delta_{\bf 0}})\big)e^{-\frac{ c_3-c_4}{2}t}
			\\&\leq 4(\mathbb{E}|X^{0}_s|^p)^{\frac{1}{p}}e^{-\frac{ c_3-c_4}{2}t},
		\end{aligned}\end{align*}
	where $X^{0}_s$ denotes the solution of \eqref{eq1.1} with initial value ${\bf 0}\in\mathbb{R}^d$. 
	It follows from Lemma \ref{l1} that  
	\begin{align*}
		\lim_{t\rightarrow\infty}\sup_{s\geq0}\boldsymbol{d}\big(\Phi_t (\delta_{\bf 0},\delta_{\delta_{\bf 0}}),\Phi_{s+t} (\delta_{\bf 0},\delta_{\delta_{\bf 0}})\big)&\leq\lim_{t\rightarrow\infty}
		Ke^{-\frac{c_3-c_4}{2}t}=0,
	\end{align*}
	which implies that $\{\Phi_t(\delta_{\bf 0},\delta_{\delta_{\bf 0}})\}_{t\geq0}$ is a Cauchy family.  Owing to the completeness of  $\big(\mathcal{P}_p(\mathbb{R}^d)\times\mathcal{P}_p(\mathcal{P}_p(\mathbb{R}^d)),\boldsymbol{d}\big)$ we know that there exists a $(u^*,\mu^*)\in\mathcal{P}_p(\mathbb{R}^d)\times\mathcal{P}_p(\mathcal{P}_p(\mathbb{R}^d))$ such that
	\begin{align}\label{eq19}
		\lim_{t\rightarrow\infty}\boldsymbol{d}\big(\Phi_t (\delta_{\bf 0},\delta_{\delta_{\bf 0}}),(u^*,\mu^*)\big)=0.
	\end{align}
	We now prove that $(u^*,\mu^*) $ is invariant under $\Phi_t$. It follows from Lemmas \ref{lemma4.2} and \ref{l3.6} that for any $t, s\geq 0$,
	\begin{align*}
		\begin{aligned}
			&	\boldsymbol{d}(\Phi_{t}(u^*,\mu^*), (u^*,\mu^*))\\&\leq \boldsymbol{d}(\Phi_{t}(u^*,\mu^*), \Phi_{t}(\Phi_{s} (\delta_{\bf 0},\delta_{\delta_{\bf 0}}))) +\boldsymbol{d}(\Phi_{s}(\Phi_{t} (\delta_{\bf 0},\delta_{\delta_{\bf 0}})), \Phi_{s} (\delta_{\bf 0},\delta_{\delta_{\bf 0}}) )
			\\&\quad+\boldsymbol{d}(\Phi_{s}(\delta_{\bf 0},\delta_{\delta_{\bf 0}}), (u^*,\mu^*)) \\
			&\leq \boldsymbol{d}(\Phi_{s}(\delta_{\bf 0},\delta_{\delta_{\bf 0}}),  (u^*,\mu^*))\Big(1+2e^{-\frac{ c_3-c_4}{2}t}\Big)+ 2\boldsymbol{d}(\Phi_{t}(\delta_{\bf 0},\delta_{\delta_{\bf 0}}), (\delta_{\bf 0},\delta_{\delta_{\bf 0}}))e^{-\frac{ c_3-c_4}{2}s}\\&\leq 3\boldsymbol{d}(\Phi_{s}(\delta_{\bf 0},\delta_{\delta_{\bf 0}}),  (u^*,\mu^*))+ 4\sup_{t\geq 0} (\mathbb{E}|X^{0}_t|^p)^{\frac{1}{p}} e^{-\frac{ c_3-c_4}{2}s}.\end{aligned}
	\end{align*}
	Letting $s\rightarrow\infty$, and utilizing \eqref{eq19} and Lemma \ref{ls}, it follows that $$\boldsymbol{d}(\Phi_{t}(u^*,\mu^*), (u^*,\mu^*))=0$$ for any $t\geq 0$, which implies 
	$
	\Phi_{t}(u^*,\mu^*)\equiv (u^*,\mu^*). 
	$
	Next, we prove the uniqueness of the invariant measure. Assume that $(u^*_1,\mu^*_1)\in \mathcal{P}_p(\mathbb{R}^d)\times\mathcal{P}_p(\mathcal{P}_p(\mathbb{R}^d)) $ is also invariant to MV-SDEwCN \eqref{eq1.1}.  By Lemma \ref{l3.6}, we conclude that 
	\begin{align*}
		\boldsymbol{d}((u^*,\mu^*),(u^*_1,\mu^*_1))
		&=\boldsymbol{d}(\Phi_{t}(u^*,\mu^*),\Phi_{t}( u^*_1,\mu^*_1) )
		\\&\leq 2\boldsymbol{d}((u^*,\mu^*),(u^*_1,\mu^*_1))e^{-\frac{c_3-c_4}{2}t}, \quad \forall t\geq0.
	\end{align*}
	Taking the limit as $t\rightarrow\infty$ implies that $(u^*,\mu^*)=(u^*_1,\mu^*_1)$. Thus, $(u^*,\mu^*)\in\mathcal{P}_p(\mathbb{R}^d)\times\mathcal{P}_p(\mathcal{P}_p(\mathbb{R}^d))$ is the unique invariant measure under $\Phi_t$.  For any $(u_0,\mu_0)\in \mathcal{P}_p(\mathbb{R}^d)\times\mathcal{P}_p(\mathcal{P}_p(\mathbb{R}^d))$, we derive from Lemma \ref{l3.6} that  for any $ t\geq0$
	\begin{align*}
		\boldsymbol{d}(\Phi_{t}( u_0,\mu_0 ), (u^*,\mu^*)) 
		=\boldsymbol{d}(\Phi_{t} (u_0,\mu_0 ), \Phi_{t} (u^*,\mu^*) )
		\leq 2\boldsymbol{d}((u_0,\mu_0), (u^*,\mu^*))e^{-\frac{c_3-c_4}{2}t} .
	\end{align*}
	The proof is therefore complete.
\end{proof}

By Theorem \ref{Th4.1}, it is easy to obtain the invariant measure of the solution $X_t$ to MV-SDEwCN \eqref{eq1.1} as follows.
\begin{cor}\label{th3.2}
	Let  {\bf(A1)} and {\bf(A2)} hold with $p> l/2$. The solution to MV-SDEwCN \eqref{eq1.1}  has a unique invariant measure $ u^* \in\mathcal{P}_p(\mathbb{R}^d) $. Moreover, for any $X_0 \in L^p(\Omega;\mathbb{R}^d)$, the solution $X_t$ to MV-SDEwCN \eqref{eq1.1} has the property  
	\begin{align*} 
		W_p(\mathcal{L}(X_t),u^* )
		\leq Ke^{-\frac{c_3-c_4}{2}t}, \quad \forall t\geq0
	\end{align*} 
where $K$ is a constant depending on $(\mathbb{E}|X_0 |^p)^{\frac{1}{p}}$.
\end{cor}
\section{The strong law of large number}\label{Sect.3}
This section is devoted to the strong LLN of the MV-SDEswCN \eqref{eq1.1}, which reveals that the average of the system  in time tends to the spatial average.  To this end, we first estimate the moment error between the temporal average and the spatial average of the system. Subsequently, we prove that the temporal limit of this discrepancy converges to zero almost everywhere.

For this purpose, we first do some prepartion. Let $C_{lip}(\mathbb{R}^d;\mathbb{R})$ be the family of functions $\psi:\mathbb{R}^d\rightarrow \mathbb{R}$ satisfying the Lipschitz condition.
Let $C_{lip}(\mathbb{R}^d,\mathcal{P}_2(\mathbb{R}^d);\mathbb{R})$ be the family of functional $\Psi:\mathbb{R}^d\times\mathcal{P}_2(\mathbb{R}^d)\rightarrow \mathbb{R}$ satisfying that there exists a positive constants $c_{\Psi}$ such that 
\begin{align}
	\label{eq7.4}|\Psi(x,u)-\Psi(y,v)|\leq c_\Psi(|x-y|+W_2(u,v)).
\end{align}
This implies that 
\begin{align}\label{eq7.4*}
	|\Psi(x,u)|&\leq |\Psi(x,u)-\Psi(0,\delta_0)|+|\Psi(0,\delta_0)|\nn\\&\leq c_\Psi(|x|+W_2(u,\delta_0))+|\Psi(0,\delta_0)|\nn\\&\leq K(1+|x|+W_2(u,\delta_0)).
\end{align}
For any functional $\Psi\in C_{lip}(\mathbb{R}^d,\mathcal{P}_2(\mathbb{R}^d);\mathbb{R})$, we set 
\begin{align}\label{eq7.5}
	S_t(\Psi)=\int_0^t\Psi(X^{\varsigma}_s,\mathcal{L}^1(X^{\varsigma}_s))ds,\quad s_t(\Psi)=t^{-1}S_t(\Psi).
\end{align}
For the invariant measure $(u^*,\mu^*)$ of the MV-SDEwCN \eqref{eq1.1}, define 
$$\langle(u^*,\mu^*),\Psi\rangle =\int_{\mathcal{P}_2(\mathbb{R}^d)}\int_{\mathbb{R}^d}\Psi(x,u)u^*(dx)\mu^*(du), \quad \forall \Psi\in C_{lip}(\mathbb{R}^d,\mathcal{P}_2(\mathbb{R}^d);\mathbb{R}).$$
To estimate the moment error between the temporal average $s_t(\Psi)$ and the spatial average $\langle(u^*,\mu^*),\Psi\rangle$ of the MV-SDEwCN \eqref{eq1.1},  we now introduce an important decoupled equation as follows:
\begin{equation}\label{eq4.4}
	\begin{cases}
		\mathrm{d}Y_t=f(Y_t,u_t)\mathrm{d}t+g(Y_t,u_t)\mathrm{d}B_t+
		g^0(Y_t,u_t)\mathrm{d}B^0_t, \\
			\mathrm{d}\langle u_t,\varphi\rangle=\langle u_t,L_{u_t} \varphi\rangle\mathrm{d}t+\langle u_t,\varphi_x g^0(\cdot, u_t)\rangle\mathrm{d}B_t^0,\varphi\in \mathcal{C}_c^\infty(\mathbb{R}^d;\mathbb{R}),\quad t\geq0.
	\end{cases}
\end{equation}
By Theorem 2.1 in \cite{KNRS2022} and the proof of Lemma \ref{ls}, for any initial value $(\varsigma, u_0)\in L^p(\Omega;\mathbb{R}^d)\times L^p(\Omega^0;\mathcal{P}_p(\mathbb{R}^d))(p\geq l/2\vee 2)$, \eqref{eq4.4} exists unique solution $(Y^{\varsigma,u_0}_t,u_t^{u_0})$.  Let $(Y^{s,x,u}_t,u_t^{s,u})$ denote that the solution to \eqref{eq4.4} with initial value $(x,u)$ starting at time $s$.  we now prepare some useful lemmas.

\begin{lem}\label{l7.1}
		Let $\theta(x,u,\omega)$ be a scalar measurable random functional of $(x,u)\in\mathbb{R}^n\times \mathcal{P}(\mathbb{R}^n)$ independent of $\mathcal{F}_s$ such that for all $(x,u)\in\mathbb{R}^n\times \mathcal{P}(\mathbb{R}^n)$, $\mathbb{E}(|\theta(x,u,\omega)|)<\infty$. Let $(\zeta,\mathcal{L}^1(\zeta))$ be a couple of $\mathcal{F}_s$-measurable random variables. Assume   $\mathbb{E}(|\theta(\zeta,\mathcal{L}^1(\zeta),\omega)|)<\infty$. Then 
	\begin{align}\label{eq7.1}\mathbb{E}(\theta(\zeta,\mathcal{L}^1(\zeta),\omega)|\mathcal{F}_s)=\Theta(\zeta,\mathcal{L}^1(\zeta)),\end{align}
	where $\Theta(x,u)=\mathbb{E}(\theta(x,u,\omega))$.
\end{lem}
\begin{proof} Since the proof is rather technical, we divide it into two steps. \\
	{\bf Step 1.} We first prove that for a scalar bounded measurable random functional $\theta(x,u,\omega)$ independent of $\mathcal{F}_s$, the result holds. Following a similar procedure to that of Lemma 9.2 in \cite{M2008}, we assume that $\theta(x,u,\omega)$ has the following simple form 
	\begin{align}\label{eq7.2}
		\theta(x,u,\omega)=\sum_{i=1}^{k}\theta^{(1)}_i(x,u)\theta^{(2)}_i(\omega),
	\end{align}
	where $\theta^{(1)}_i(x,u)$ is a bounded deterministic function of $(x,u)$ and $\theta^{(2)}_i(\omega)$ is a bounded  measurable random variable independent of $\mathcal{F}_s$, then $\Theta(x,u)=\sum_{i=1}^{k}\theta^{(1)}_i(x,u)\mathbb{E}\theta^{(2)}_i(\omega)$.
	Now for any set $F\in\mathcal{F}_s$, we compute that
	$$
	\begin{aligned}
		\mathbb{E}\left(\theta(\zeta,\mathcal{L}^1(\zeta),\omega) I_F\right) & =\mathbb{E}\Big(\sum_{i=1}^{k}\theta^{(1)}_i(\zeta,\mathcal{L}^1(\zeta))\theta^{(2)}_i(\omega)I_F\Big)=\sum_{i=1}^k \mathbb{E}\Big(\theta^{(1)}_i(\zeta,\mathcal{L}^1(\zeta)) I_F\Big) \mathbb{E}\theta^{(2)}_i(\omega) \\
	&	=  \mathbb{E}\Big( \sum_{i=1}^k\theta^{(1)}_i(\zeta,\mathcal{L}^1(\zeta))\mathbb{E} \theta^{(2)}_i(\omega) I_F\Big)=\mathbb{E}\left(\Theta(\zeta,\mathcal{L}^1(\zeta)) I_F\right)
	\end{aligned}
	$$
	By definition, this means that \eqref{eq7.1} holds if $\theta(x,u,\omega)$ has the form of \eqref{eq7.2}. Since any bounded measurable random functional  $\theta(x,u,\omega)$ can be approximated by functions of the form \eqref{eq7.2}, the general result of the lemma for bounded measurable random functionals follow immediately.\\
	{\bf Step 2.} We now prove that the result holds for the scalar measurable random functional case. First let $\theta_{m}(x,u,\omega)=\theta(x,u,\omega)\wedge(m)\vee(-m)$, where $m$ is a positive constant. Then $|\theta_{m}(x,u,\omega)|\leq|\theta(x,u,\omega)|$ and $\lim_{m\rightarrow\infty}\theta_{m}(x,u,\omega)=\theta(x,u,\omega)$ a.s. By the property of the conditional expectation, we have
	$\mathbb{E}\left(\theta_{m}(\zeta,\mathcal{L}^1(\zeta),\omega) |\mathcal{F}_s\right)\leq\mathbb{E}\left(|\theta(\zeta,\mathcal{L}^1(\zeta),\omega) ||\mathcal{F}_s\right)<\infty, a.s.$ Then thanks to $\mathbb{E}(|\theta(x,u,\omega)|)<\infty$ and $\mathbb{E}(|\theta(\zeta,\mathcal{L}^1(\zeta),\omega)|)<\infty$,   it follows from the dominated convergence theorem
	\begin{align}
		\mathbb{E}(\theta(\zeta,\mathcal{L}^1(\zeta),\omega)|\mathcal{F}_s)&=\lim_{m\rightarrow\infty}\mathbb{E}(\theta_m(\zeta,\mathcal{L}^1(\zeta),\omega)|\mathcal{F}_s)\nn\\
		&=\lim_{m\rightarrow\infty}\Theta_m(\zeta,\mathcal{L}^1(\zeta))=\Theta(\zeta,\mathcal{L}^1(\zeta)),
	\end{align}
	where $\Theta_m(x,u)=\mathbb{E}(\theta_{m}(x,u,\omega)).$ This completes the proof.
\end{proof}

With the preceding lemma at hand, we are now ready to establish the conditional expectation estimate for the solution to the MV-SDEwCN. Specifically, the following theorem proves that the conditional expectation of a given functional of the solution couple decays at an exponential rate.
\begin{lem}\label{l7.2}
	Let  {\bf(A1)} and {\bf(A2)} hold with $p> l/2$. The solution couple $(X_t, \mathcal{L}^1(X_t) ) $ to MV-SDEwCN \eqref{eq1.1}  with initial value $\varsigma \in L^p(\Omega;\mathbb{R}^d)$ satisfies the following:
	for any functional $\Psi\in C_{lip}(\mathbb{R}^d,\mathcal{P}_2(\mathbb{R}^d);\mathbb{R})$ such that $\langle(u^*,\mu^*),\Psi\rangle=0$,
	\begin{align*}
		\mathbb{E}(\Psi(X_{t}^{\varsigma},\mathcal{L}^1(X_{t}^{\varsigma}))|\mathcal{F}_{s})\leq K(1+|X_{s}^{\varsigma}|+W_2(\mathcal{L}^1(X_{s}^{\varsigma}),\delta_0))e^{-\frac{(c_3-c_4)}{4}(t-s)},\quad \forall t\geq s.
	\end{align*}
\end{lem}	
	
\begin{proof}
	It follows from the existence and uniqueness of the strong solution to \eqref{eq1.1} and \eqref{r3} that
	$$(X_{t}^{\varsigma},\mathcal{L}^1(X_{t}^{\varsigma}))=(X_{t}^{s,X_{s}^{\varsigma}},\mathcal{L}^1(X_{t}^{s,X_{s}^{\varsigma}}))
	,\quad \forall t\geq s.$$
	By the uniqueness of the solution to \eqref{eq4.4}, we have for any  $t\geq s$,
	$$X_{t}^{s,X_{s}^{\varsigma}}=Y^{s,X_{s}^{\varsigma},\mathcal{L}^1(X_{s}^{\varsigma})}_t~ \mathrm{and}~\mathcal{L}^1(X_{t}^{s,X_{s}^{\varsigma}})
	=u_t^{s,\mathcal{L}^1(X_{s}^{\varsigma})},~~\mathrm{a.s}.$$
	It implies that
	\begin{align}\label{eq4.8}
		\mathbb{E}(\Psi(X_{t}^{\varsigma},\mathcal{L}^1(X_{t}^{\varsigma}))|\mathcal{F}_{s})=\mathbb{E}(\Psi(Y^{s,X_{s}^{\varsigma},\mathcal{L}^1(X_{s}^{\varsigma})}_t,u_t^{s,\mathcal{L}^1(X_{s}^{\varsigma})})|\mathcal{F}_{s}).
	\end{align}
	It is easy to see that $(Y^{s,x,u}_t,u_t^{s,u})$ is independent of $\mathcal{F}_s$ and $(X_{s}^{\varsigma},\mathcal{L}^1(X_{s}^{\varsigma}))$ is $\mathcal{F}_s$-measurable. Then using Lemma \ref{l7.1}  and \eqref{eq4.8} with $\theta(x,u,\omega)=\Psi(Y^{s,x,u}_t,u_t^{s,u})$, we obtain that
	\begin{align}\label{eq7.9}
		\mathbb{E}\big(\Psi(X_{t}^{\varsigma},\mathcal{L}^1(X_{t}^{\varsigma}))\big|\mathcal{F}_{s}\big)
=\mathbb{E}\big(\Psi(Y^{s,x,u}_t,u_t^{s,u})\big)\big|_{(x,u)=(X_{s}^{\varsigma},\mathcal{L}^1(X_{s}^{\varsigma}))}.
	\end{align}
	Making use of $\langle(u^*,\mu^*),\Psi\rangle=0$ and \eqref{eq7.4}, one obtains that
	\begin{align}\label{eq7.10}
		&\mathbb{E}\big[\Psi(Y^{s,x,u}_t,u_t^{s,u})\big]
=\langle\big(\mathcal{L}(Y^{s,x,u}_t),\mathcal{L}(u_t^{s,u})\big),\Psi\rangle-\langle(u^*,\mu^*),\Psi\rangle
		\nn\\&\!=\!\int_{\mathcal{P}(\mathbb{R}^d)}\!\int_{\mathbb{R}^d}\!\int_{\mathcal{P}(\mathbb{R}^d)}\!\int_{\mathbb{R}^d} \!\!\!\big(\Psi(x_1,u_1)\!-\!\Psi(x_2,u_2)\big)\mathcal{L}(Y^{s,x,u}_t)(dx_1)\mathcal{L}(u_t^{s,u})(du_1)u^*(dx_2)\mu^*(du_2)
		\nn\\&\!\leq \!c_{\Psi}\!\!\int_{\mathcal{P}(\mathbb{R}^d)}\!\int_{\mathbb{R}^d}\!\int_{\mathcal{P}(\mathbb{R}^d)}\!\int_{\mathbb{R}^d} \!\!\!\!(|x_1\!-\!x_2|\!+\!W_2(u_1,u_2))\mathcal{L}(Y^{s,x,u}_t)(dx_1)\mathcal{L}(u_t^{s,u})(du_1)u^*(dx_2)\mu^*(du_2).
	\end{align}
	Let $\varkappa\in L^2(\Omega;\mathbb{R}^d)$ be a random variable such that $\mathcal{L}(\varkappa)=u^*$ and $\mathcal{L}(\mathcal{L}^1(\varkappa))=\mu^*$. Then we have for any $t\geq s$, $(\mathcal{L}(X_{t}^{s,\varkappa}),\mathcal{L}(\mathcal{L}^1(X_{t}^{s,\varkappa})))=(u^*,\mu^*)$. This together with \eqref{eq7.10} gives that
	\begin{align}\label{eq7.11}
		\mathbb{E}\Psi(Y^{s,x,u}_t,u_t^{s,u})&\leq c_{\Psi}\big(\mathbb{E}|Y^{s,x,u}_t-X_{t}^{s,\varkappa}|+\mathbb{E}W_2(u_t^{s,u},\mathcal{L}^1(X_{t}^{s,\varkappa}))\big).
	\end{align}
	By the H\"older inequality, it is easy to see that
	\begin{align}\label{eq7.12}
		\mathbb{E}|Y^{s,x,u}_t-X_{t}^{s,\varkappa}|\leq \big(\mathbb{E}|Y^{s,x,u}_t-X_{t}^{s,\varkappa}|^2\big)^{\frac{1}{2}}
	\end{align}
	Now we turn to estimate $\mathbb{E}|Y^{s,x,u}_t-X_{t}^{s,\varkappa}|^2$. Making use of \eqref{eq1.1}, \eqref{eq4.4} and {\bf (A2)}, we have
		\begin{align}\label{eq4.13}
				e^{\frac{c_3-c_4}{2} t}\mathbb{E}|Y^{s,x,u}_t-X_{t}^{s,\varkappa}|^2
			&\leq \mathbb{E}|x-\varkappa|^2+\int_s^te^{\frac{c_3-c_4}{2} r}\Big(\Big(\frac{c_3-c_4}{2}-c_3
			\Big)\mathbb{E}|Y^{s,x,u}_r-X_{r}^{s,\varkappa}|^2\nn\\
			&\quad+c_4\mathbb{E}W_2^2(u_r^{s,u},\mathcal{L}^1(X_{r}^{s,\varkappa}))\big)\mathrm{d}r
			\nn\\&\leq \mathbb{E}|x-\varkappa|^2+\int_s^tc_4e^{\frac{c_3-c_4}{2} r}\mathbb{E}W_2^2(u_r^{s,u},\mathcal{L}^1(X_{r}^{s,\varkappa}))\mathrm{d}r.
	\end{align}
	For any $u\in\mathcal{P}_2(\mathbb{R}^d)$, there exists a random variable $\varsigma_u$ on $\Omega_2$ such that $\mathcal{L}(\varsigma_u)=u$.  Let $X_t^{s,\varsigma_u}$ be the MV-SDEwCN \eqref{eq1.1} with initial value $\varsigma_u$. By the relationship between \eqref{eq1.1} and \eqref{r3},  we have $u_t^{s,u}=\mathcal{L}^1(X_t^{s,\varsigma_u}),\forall t\geq s$. Then it follows from the H\"older inequality  and \eqref{ls1} that
	\begin{align}\label{eq7.13}
\mathbb{E}W_2^2(u_t^{s,u},\mathcal{L}^1(X_{t}^{s,\varkappa}))\leq \mathbb{E}|X_t^{s,\varsigma_u}-X_{t}^{s,\varkappa}|^2.
	\end{align}
	Similar to the proof of \eqref{eq18}, we have
	\begin{align}\label{eq7.14}
		\begin{aligned}
\mathbb{E}|X_t^{s,\varsigma_u}\!-\!X_{t}^{s,\varkappa}|^2
&\leq \mathbb{E}|\varsigma_u-\varkappa|^2e^{-(c_3-c_4)(t-s)}\leq K(1\!+\!W_2^2(u,\delta_0))e^{-(c_3-c_4)(t-s)}.\end{aligned}
	\end{align}
	Combining \eqref{eq4.13}-\eqref{eq7.14} gives that
	\begin{align}\label{eq4.16}
	e^{\frac{c_3-c_4}{2} t}\mathbb{E}|Y^{s,x,u}_t-X_{t}^{s,\varkappa}|^2
	&\leq \mathbb{E}|x-\varkappa|^2+K(1\!+\!W_2^2(u,\delta_0))e^{(c_3-c_4)s}\int_s^te^{-\frac{c_3-c_4}{2} r}\mathrm{d}r\nn\\
	&\leq K(1+|x|^2)+K(1\!+\!W_2^2(u,\delta_0))e^{\frac{c_3-c_4}{2} s}.
	\end{align}
	Dividing $e^{\frac{c_3-c_4}{2} t}$ on both side of \eqref{eq4.16}, we arrive at that
		\begin{align*}
		\mathbb{E}|Y^{s,x,u}_t-X_{t}^{s,\varkappa}|^2
		&\leq K(1+|x|^2+W_2^2(u,\delta_0))e^{-\frac{c_3-c_4}{2} (t-s)},
	\end{align*}
	which together with \eqref{eq7.12} implies that
	\begin{align}\label{eq4.17}
		\mathbb{E}|Y^{s,x,u}_t-X_{t}^{s,\varkappa}|
		&\leq K(1+|x|+W_2(u,\delta_0))e^{-\frac{c_3-c_4}{4} (t-s)}.
	\end{align}
	Using \eqref{eq7.13} and \eqref{eq7.14}, we have
	\begin{align}\label{eq4.18}
		\mathbb{E}W_2(u_t^{s,u},\mathcal{L}^1(X_{t}^{s,\varkappa}))\leq K(1\!+\!W_2(u,\delta_0))e^{-\frac{c_3-c_4}{2}(t-s)}.
	\end{align}
	Inserting \eqref{eq4.17} and \eqref{eq4.18} into \eqref{eq7.11} shows that
	\begin{align*}
		\mathbb{E}\Psi(Y^{s,x,u}_t,u_t^{s,u})&\leq K\big(1+|x|+W_2(u,\delta_0)\big)e^{-\frac{(c_3-c_4)}{4}(t-s)}.
	\end{align*}
	This together with \eqref{eq7.9} gives that
	\begin{align*}
		\mathbb{E}\big(\Psi(X_{t}^{\varsigma},\mathcal{L}^1(X_{t}^{\varsigma}))\big|\mathcal{F}_{s}\big)\leq K\big(1+|X_{s}^{\varsigma}|+W_2(\mathcal{L}^1(X_{s}^{\varsigma}),\delta_0)\big)e^{-\frac{(c_3-c_4)}{4}(t-s)}.
	\end{align*} 
	This completes the proof.
\end{proof}

 We now proceed to estimate the moment error between the temporal average $s_t(\Psi)$ and the spatial average $\langle(u^*,\mu^*),\Psi\rangle$ of the MV-SDEwCN \eqref{eq1.1}.
\begin{lem}\label{l7.3}
	Let  {\bf(A1)} and {\bf(A2)} hold with $p> l/2$. Let  $(u^*,\mu^*)$ be invariant measure of the solution couple $(X_t, \mathcal{L}^1(X_t) ) $ to MV-SDEwCN \eqref{eq1.1}  with initial value $\varsigma \in L^p(\Omega;\mathbb{R}^d)$. Then 
	for any functional $\Psi\in C_{lip}(\mathbb{R}^d,\mathcal{P}_2(\mathbb{R}^d);\mathbb{R})$ and any integer $1\leq q \leq p/2$, 
	\begin{align}	\mathbb{E}|s_t(\Psi)-\langle(u^*,\mu^*),\Psi\rangle|^{2q}\leq K(1+\mathbb{E}|\varsigma|^p)t^{-q}.
	\end{align}
\end{lem}	
\begin{proof}
	Without loss of generality, we may assume that $\langle(u^*,\mu^*),\Psi\rangle=0$ for a fixed functional 
$\Psi\in C_{lip}(\mathbb{R}^d,\mathcal{P}_2(\mathbb{R}^d);\mathbb{R})$. Define 
	\begin{align}
		I_q(t)=\sup_{0\leq r\leq t}\mathbb{E}|S_r(\Psi)|^{2q}.
	\end{align}
	By the definition of $S_r(\Psi)$ in \eqref{eq7.5}, we have
	\begin{align}
		&\mathbb{E}|S_r(\Psi)|^{2q}\nn\\
		=&\mathbb{E}	\int_{[0,r]^{2q}}\Psi(X_{r_{1}}^{\varsigma},\mathcal{L}^1(X_{r_{1}}^{\varsigma}))\Psi(X_{r_{2}}^{\varsigma},\mathcal{L}^1(X_{r_{2}}^{\varsigma}))\cdots\Psi(X_{r_{2q}}^{\varsigma},\mathcal{L}^1(X_{r_{2q}}^{\varsigma}))dr_{1}\cdots dr_{2q}\nn\\=&
		(2q)!\mathbb{E}	\int_{\Delta_{q}(r)}\Psi(X_{r_{1}}^{\varsigma},\mathcal{L}^1(X_{r_{1}}^{\varsigma}))\Psi(X_{r_{2}}^{\varsigma},\mathcal{L}^1(X_{r_{2}}^{\varsigma}))\cdots\Psi(X_{r_{2q}}^{\varsigma},\mathcal{L}^1(X_{r_{2q}}^{\varsigma}))dr_{1}\cdots dr_{2q}
	\end{align}
	where $\Delta_{q}(r)=\{(r_1,r_2,\cdots,r_{2q})\in\mathbb{R}^{2q}:0\leq r_1\leq \cdots\leq r_{2q}\leq r\}$.
	Making use of the property of conditional expectation, one arrives at that
	\begin{align*}
		\mathbb{E}|S_r(\Psi)|^{2q}=&(2q)!\mathbb{E}	\int_{\Delta_{q}(r)}\Psi(X_{r_{1}}^{\varsigma},\mathcal{L}^1(X_{r_{1}}^{\varsigma}))\Psi(X_{r_{2}}^{\varsigma},\mathcal{L}^1(X_{r_{2}}^{\varsigma}))\cdots
		\Psi(X_{r_{2q-2}}^{\varsigma},\mathcal{L}^1(X_{r_{2q-2}}^{\varsigma}))
		\nn\\&\times\Gamma(r_{2q-1},r_{2q})dr_{1}\cdots dr_{2q}\nn\\		
		=&(2q)!\mathbb{E}\int_{0}^r\int_{0}^{r_{2q}} \int_{\Delta_{q-1}(r_{2q-1})}\Psi(X_{r_{1}}^{\varsigma},\mathcal{L}^1(X_{r_{1}}^{\varsigma}))\cdots
		\Psi(X_{r_{2q-2}}^{\varsigma},\mathcal{L}^1(X_{r_{2q-2}}^{\varsigma}))
		\nn\\&\times\Gamma(r_{2q-1},r_{2q})dr_{1}\cdots dr_{2q}
		\nn\\		=&2q(2q-1)\mathbb{E}\int_{0}^r\int_{0}^{r_{2q}}\Gamma(r_{2q-1},r_{2q})
		|S_{r_{2q-1}}(\Psi)|^{2q-2}dr_{2q-1}	dr_{2q},
	\end{align*}
	where $$\Gamma(r_{2q-1},r_{2q})=\Psi(X_{r_{2q-1}}^{\varsigma},\mathcal{L}^1(X_{r_{2q-1}}^{\varsigma}))\mathbb{E}(\Psi(X_{r_{2q}}^{\varsigma},\mathcal{L}^1(X_{r_{2q}}^{\varsigma}))|\mathcal{F}_{r_{2q-1}}).$$
	Then it follows from the H\"older inequality that
	\begin{align*}
		\mathbb{E}|S_r(\Psi)|^{2q}\leq&2q(2q-1)\int_{0}^r\int_{0}^{r_{2q}}\big(\mathbb{E}|\Gamma(r_{2q-1},r_{2q})|^{q}\big)^{\frac{1}{q}} 
		\big(\mathbb{E}|S_{r_{2q-1}}(\Psi)|^{2q}\big)^{\frac{q-1}{q}}dr_{2q-1}	dr_{2q}.
	\end{align*}
	Now taking the supremum over $r \in [0, t]$, one obtains that
	\begin{align*}
		I_q(t)\leq&2q(2q-1)\int_{0}^t\int_{0}^{r_{2q}}\big(\mathbb{E}|\Gamma(r_{2q-1},r_{2q})|^{q}\big)^{\frac{1}{q}} 
		\big(\mathbb{E}|S_{r_{2q-1}}(\Psi)|^{2q}\big)^{\frac{q-1}{q}}dr_{2q-1}	dr_{2q}\nn\\
		\leq&2q(2q-1)(I_q(t))^{\frac{q-1}{q}}
		\int_{0}^t\int_{0}^{r_{2q}}\big(\mathbb{E}|\Gamma(r_{2q-1},r_{2q})|^{q}\big)^{\frac{1}{q}} dr_{2q-1}	dr_{2q}.
	\end{align*}
	Thus, we have 
	\begin{align}\label{eq7.9-}
		I_q(t)\leq&\Big(2q(2q-1)
		\int_{0}^t\int_{0}^{r_{2q}}\big(\mathbb{E}|\Gamma(r_{2q-1},r_{2q})|^{q}\big)^{\frac{1}{q}} dr_{2q-1}dr_{2q}\Big)^{q}.
	\end{align}
	We now estimate $\Gamma(r_{2q-1},r_{2q})$. By Lemma \ref{l7.2} one has that 
	\begin{align}\label{eq7.15}
		\mathbb{E}(\Psi(X_{r_{2q}}^{\varsigma},\mathcal{L}^1(X_{r_{2q}}^{\varsigma}))|\mathcal{F}_{r_{2q-1}})\leq K(1\!+\!|X_{r_{2q-1}}^{\varsigma}|\!+\!W_2(\mathcal{L}^1(X_{r_{2q-1}}^{\varsigma}),\delta_0))e^{-\frac{(c_3-c_4)(r_{2q}- r_{2q-1})}{4}}
	\end{align}
	Thus making use of \eqref{eq7.4*} and \eqref{eq7.15}, one can derive that
	\begin{align}\label{eq7.16}
		&\Gamma(r_{2q-1},r_{2q})=\Psi(X_{r_{2q-1}}^{\varsigma},\mathcal{L}^1(X_{r_{2q-1}}^{\varsigma}))\mathbb{E}(\Psi(X_{r_{2q}}^{\varsigma},\mathcal{L}^1(X_{r_{2q}}^{\varsigma}))|\mathcal{F}_{r_{2q-1}})\nn\\&\leq K|\Psi(X_{r_{2q-1}}^{\varsigma},\mathcal{L}^1(X_{r_{2q-1}}^{\varsigma}))|(1+|X_{r_{2q-1}}^{\varsigma}|+W_2(\mathcal{L}^1(X_{r_{2q-1}}^{\varsigma}),\delta_0))e^{-\frac{(c_3-c_4)(r_{2q}- r_{2q-1})}{4}}
		\nn\\&\leq K\big(1+|X_{r_{2q-1}}^{\varsigma}|+W_2(\mathcal{L}^1(X_{r_{2q-1}}^{\varsigma}),\delta_0)\big)^2e^{-\frac{(c_3-c_4)(r_{2q}- r_{2q-1})}{4}}
		\nn\\&\leq K\Big(1+|X_{r_{2q-1}}^{\varsigma}|^2+W_2^2(\mathcal{L}^1(X_{r_{2q-1}}^{\varsigma})),\delta_0)\Big)e^{-\frac{(c_3-c_4)(r_{2q}- r_{2q-1})}{4}}.
	\end{align}
	Inserting \eqref{eq7.16} to \eqref{eq7.9-}, we have that
	\begin{align}
		I_q(t)\leq&\Big(2q(2q-1)
		\int_{0}^t\int_{0}^{r_{2q}}\big(\mathbb{E}|\Gamma(r_{2q-1},r_{2q})|^{q}\big)^{\frac{1}{q}} dr_{2q-1}dr_{2q}\Big)^{q}
		\nn\\\leq &\Big(K\int_{0}^t\int_{0}^{r_{2q}}\big(1+(\mathbb{E}|X_{r_{2q-1}}^{\varsigma}|^{2q})^{\frac{1}{q}}\big)e^{-\frac{(c_3-c_4)(r_{2q}- r_{2q-1})}{4}}dr_{2q-1}dr_{2q}\Big)^{q}.
	\end{align}
	Then by Lemma \ref{l1}, one obtain that  
	\begin{align}\label{eq7.20}
		I_q(t)\leq &\Big(K\big(1+(\mathbb{E}|\varsigma|^p+K)^{\frac{1}{q}}\big)\int_{0}^t\int_{0}^{r_{2q}}e^{-\frac{(c_3-c_4)(r_{2q}- r_{2q-1})}{4}}dr_{2q-1}dr_{2q}\Big)^{q}\nn\\\leq &(K\big(1+(\mathbb{E}|\varsigma|^p+K)^{\frac{1}{q}}\big)t\Big)^{q}\leq K(1+\mathbb{E}|\varsigma|^p)t^q.
	\end{align}	 Then dividing $t^{2q}$ on both sides of \eqref{eq7.20} gives that
	\begin{align*}
		\mathbb{E}|s_t(\Psi)-\langle(u^*,\mu^*),\Psi\rangle|^{2q}\leq t^{-2q}I_q(t)\leq K(1+\mathbb{E}|\varsigma|^p)t^{-q}.
	\end{align*}	The proof is therefore complete.\end{proof}	

Equipped with the preceding estimates, we are now in a position to establish the main result of this section, namely the strong law of large number. 
\begin{thm}\label{t7.1}
	Let  {\bf(A1)} and {\bf(A2)} hold with $p> (l/2)\vee 4 $.  Let $(u^*,\mu^*)$ be the invariant measure  of the solution couple $(X_t, \mathcal{L}^1(X_t) ) $ to MV-SDEwCN \eqref{eq1.1}  with initial value $\varsigma \in L^p(\Omega;\mathbb{R}^d)$. Then for any functional $\Psi\in C_{lip}(\mathbb{R}^d,\mathcal{P}_2(\mathbb{R}^d);\mathbb{R})$, 
	\begin{align*}
		\frac{1}{T} \int_0^T \Psi\left(X_t^\varsigma,\mathcal{L}^1(X_t^\varsigma)\right) \mathrm{d} t \xrightarrow{\text { a.s. }} \langle(u^*,\mu^*),\Psi\rangle \quad  { \mathrm as }~ T \rightarrow \infty .
	\end{align*}
\end{thm}

\begin{proof}
To establish the desired assertion, it suffices to prove that 
\begin{align}\label{eq7.21}
	&\lim_{T \rightarrow \infty}\Big|	\frac{1}{T} \int_0^T (\Psi\left(X_t^\varsigma,\mathcal{L}^1(X_t^\varsigma)\right) -\langle(u^*,\mu^*),\Psi\rangle ) \mathrm{d} t\Big|=0, \text { a.s. }
\end{align}
Now we compute that
\begin{align}\label{eq7.22}
	&\Big|	\frac{1}{T} \int_0^T \big(\Psi(X_t^\varsigma,\mathcal{L}^1(X_t^\varsigma)) -\langle(u^*,\mu^*),\Psi\rangle\big) \mathrm{d} t\Big|\nn\\
	&\leq \Big|	\frac{1}{T} \int_0^{\lfloor T\rfloor} \!\!\!(\Psi\left(X_t^\varsigma,\mathcal{L}^1(X_t^\varsigma)\right) \!-\!\langle(u^*,\mu^*),\Psi\rangle ) \mathrm{d} t\Big|\!+\!\Big|	\frac{1}{T} \int_{\lfloor T\rfloor}^T\!\!\! (\Psi\left(X_t^\varsigma,\mathcal{L}^1(X_t^\varsigma)\right) \!-\!\langle(u^*,\mu^*),\Psi\rangle ) \mathrm{d} t\Big|\nn\\
	&\leq \Big|	\frac{1}{\lfloor T\rfloor} \int_0^{\lfloor T\rfloor} \!\!\!\!(\Psi\left(X_t^\varsigma,\mathcal{L}^1(X_t^\varsigma)\right)\! -\!\langle(u^*,\mu^*),\Psi\rangle ) \mathrm{d} t\Big|\!+\!\Big|	\frac{1}{T} \int_{\lfloor T\rfloor}^T\!\!\! (\Psi\left(X_t^\varsigma,\mathcal{L}^1(X_t^\varsigma)\right) \!-\!\langle(u^*,\mu^*),\Psi\rangle ) \mathrm{d} t\Big|\nn\\
	&=:I_1(T)+I_2(T).
\end{align}
Next, we verify that  $\lim_{T \rightarrow \infty}I_1(T)=0$, a.s. It is easy to see that for any $T>0$, $\lfloor T\rfloor$ is a positive integer. Thus we only need to prove that 
\begin{align}\label{eq7.23}
	\lim_{k \rightarrow \infty}\Big|	\frac{1}{k} \int_0^k (\Psi\left(X_t^\varsigma,\mathcal{L}^1(X_t^\varsigma)\right) -\langle(u^*,\mu^*),\Psi\rangle ) \mathrm{d} t\Big|=0, \text { a.s. }
\end{align}
To this end, for any $k\in N_+$ and $\delta\in(0,\frac{1}{4})$, define
\begin{align}
	\mathcal{A}_{k}^{\varsigma}=\Big\{\omega\in\Omega:\Big|	\frac{1}{k} \int_0^k (\Psi\left(X_t^\varsigma,\mathcal{L}^1(X_t^\varsigma)\right) -\langle(u^*,\mu^*),\Psi\rangle ) \mathrm{d} t\Big|>|c_{\Psi}|k^{-\delta}\Big\}.
\end{align}
By the Chebyshev inequality and Lemma \ref{l7.3}, we have that
\begin{align}
	\mathbb{P}(\mathcal{A}_{k}^{\varsigma})&\leq \frac{\mathbb{E}\Big|	\frac{1}{k} \int_0^k (\Psi\left(X_t^\varsigma,\mathcal{L}^1(X_t^\varsigma)\right) -\langle(u^*,\mu^*),\Psi\rangle ) \mathrm{d} t\Big|^4}{|c_{\Psi}|^4k^{-4\delta}}\nn\\&\leq  K (1+\mathbb{E}|\varsigma|^{4})k^{2(2\delta-1)}.
\end{align}
Since $\delta\in(0,1/4)$, we have $2(2\delta-1)<-1$. Then the series $\sum_{k=1}^{n}k^{2(2\delta-1)}$ converges as $n\rightarrow \infty$. It follows that
\begin{align}
	\sum_{k=1}^{\infty}\mathbb{P}(\mathcal{A}_{k}^{\varsigma})\leq K \mathbb{E}|\varsigma|^{4}\sum_{k=1}^{\infty}k^{2(2\delta-1)}\leq K \mathbb{E}|\varsigma|^{4}.
\end{align}
Define the random variable $\mathcal{B}^{\varsigma}_{\delta}(\omega)$ by $$\mathcal{B}^{\varsigma}_{\delta}(\omega):=\inf\Big\{j\in N_+:\Big|	\frac{1}{k} \int_0^k (\Psi\left(X_t^\varsigma,\mathcal{L}^1(X_t^\varsigma)\right) -\langle(u^*,\mu^*),\Psi\rangle ) \mathrm{d} t\Big|\leq|c_{\Psi}|k^{-\delta}, \mathrm{for}~k\geq j\!+\!1\Big\}.$$
Then making use of the Borel-Cantelli lemma, one has $\mathcal{B}^{\varsigma}_{\delta}(\omega)<\infty $ a.s., and 
\begin{align}
	\mathbb{P}\Big\{\omega\!\in\!\Omega:\Big|	\frac{1}{k} \int_0^k \!(\Psi\left(X_t^\varsigma,\mathcal{L}^1(X_t^\varsigma)\right)\!-\!\langle(u^*,\mu^*),\Psi\rangle ) \mathrm{d} t\Big|\leq |c_{\Psi}|k^{-\delta}, \mathrm{for}~k\!\geq\! \mathcal{B}^{\varsigma}_{\delta}(\omega)\!+\!1\Big\}\!=\!1,
\end{align}	 
which implies that \eqref{eq7.23} holds. We now  turn to prove that $\lim_{T \rightarrow \infty}I_2(T)=0$, a.s.  It is easy to see that
\begin{align}
	&I_2(T)=\Big|	\frac{1}{T} \int_{\lfloor T\rfloor}^{T} (\Psi\left(X_t^\varsigma,\mathcal{L}^1(X_t^\varsigma)\right) -\langle(u^*,\mu^*),\Psi\rangle ) \mathrm{d} t\Big|\nn\\
	&\leq
	\Big|	\frac{1}{\lfloor T\rfloor} \int_{\lfloor T\rfloor}^{T} (\Psi\left(X_t^\varsigma,\mathcal{L}^1(X_t^\varsigma)\right) -\langle(u^*,\mu^*),\Psi\rangle ) \mathrm{d} t\Big|
	\nn\\
	&\leq	\frac{1}{\lfloor T\rfloor} \int_{\lfloor T\rfloor}^{\lceil T \rceil
	} |\Psi\left(X_t^\varsigma,\mathcal{L}^1(X_t^\varsigma)\right) -\langle(u^*,\mu^*),\Psi\rangle|  \mathrm{d} t.
\end{align}
Note that for any $T>0$, there exists a positive integer $k$ such that $k\leq T<(k+1)$. Consequently, we only need to prove that 
\begin{align}\label{eq7.29}
	\lim_{k\rightarrow \infty}	\frac{1}{k} \int_{k}^{k+1
	} |\Psi\left(X_t^\varsigma,\mathcal{L}^1(X_t^\varsigma)\right) -\langle(u^*,\mu^*),\Psi\rangle|  \mathrm{d} t=0, ~\mathrm{a.s.}
\end{align}
For any $k\in N_+$ and $\delta\in(0,\frac{1}{4})$, define
\begin{align}
	\mathcal{C}_{k}^{\varsigma}=\Big\{\omega\in\Omega:	\frac{1}{k} \int_k^{k+1} |\Psi\left(X_t^\varsigma,\mathcal{L}^1(X_t^\varsigma)\right) -\langle(u^*,\mu^*),\Psi\rangle | \mathrm{d} t>|c_{\Psi}|k^{-\delta}\Big\}.
\end{align}
By the Chebyshev inequality, \eqref{eq7.4*} and Lemma \ref{l1}, we have that
\begin{align}
	\mathbb{P}(\mathcal{C}_{k}^{\varsigma})&\leq \frac{\mathbb{E}\Big|	\frac{1}{k} \int_k^{k+1}| \Psi\left(X_t^\varsigma,\mathcal{L}^1(X_t^\varsigma)\right) -\langle(u^*,\mu^*),\Psi\rangle | \mathrm{d} t\Big|^2}{|c_{\Psi}|^2k^{-2\delta}}\nn\\&\leq  
	\frac{\mathbb{E}\Big|	\frac{1}{k} \Big(K+K\int_k^{k+1} (1+|X_t^\varsigma|+W_2(\mathcal{L}^1(X_t^\varsigma),\delta_0))\mathrm{d} t\Big|^4}{|c_{\Psi}|^2k^{-2\delta}}\nn\\&\leq K (\mathbb{E}|\varsigma|^{2}+K)k^{2(\delta-1)}.
\end{align}
It follows from $\delta\in(0,1/4)$ that $2(\delta-1)<-1$. Then the series $\sum_{k=1}^{n}k^{2(\delta-1)}$ converges as $n\rightarrow \infty$, which implies that
\begin{align}
	\sum_{k=1}^{\infty}\mathbb{P}(\mathcal{C}_{k}^{\varsigma})\leq K (\mathbb{E}|\varsigma|^{2}+K)\sum_{k=1}^{\infty}k^{2(\delta-1)}< \infty.
\end{align}
Define the random variable $\mathcal{D}^{\varsigma}_{\delta}(\omega)$ by $$\mathcal{D}^{\varsigma}_{\delta}(\omega):=\inf\Big\{j\in N_+:	\frac{1}{k} \int_k^{k+1} |(\Psi\left(X_t^\varsigma,\mathcal{L}^1(X_t^\varsigma)\right) -\langle(u^*,\mu^*),\Psi\rangle )| \mathrm{d} t\leq|c_{\Psi}|k^{-\delta}, \mathrm{for}~k\geq j\!+\!1\Big\}.$$
Then by the Borel-Cantelli lemma, we arrive at that $\mathcal{D}^{\varsigma}_{\delta}(\omega)<\infty $ a.s., and 
\begin{align}
	\mathbb{P}\Big\{\omega\!\in\!\Omega:\frac{1}{k} \int_k^{k+1} |(\Psi\left(X_t^\varsigma,\mathcal{L}^1(X_t^\varsigma)\right) -\langle(u^*,\mu^*),\Psi\rangle )| \mathrm{d} t\leq |c_{\Psi}|k^{-\delta}, \mathrm{for}~k\!\geq\! \mathcal{D}^{\varsigma}_{\delta}(\omega)\!+\!1\Big\}\!=\!1.
\end{align}	 
This implies that \eqref{eq7.29} holds. One thus has $\lim_{T\rightarrow \infty}I_2(T)=0$, a.s. Now  taking limit with respect to T on both sides of \eqref{eq7.22} gives \eqref{eq7.21}. The proof is therefore complete.
\end{proof}

Specializing Theorem \ref{t7.1} to the case where $\Psi(x,u)=\psi(x)$ gives the following corollary, which establishes the ergodicity of the invariant measure $u^*$ for the solution $X_t^{\varsigma}$ to the MV-SDEwCN \eqref{eq1.1}.
\begin{cor}
	Let  {\bf(A1)} and {\bf(A2)} hold with $p> l/2\vee 4$. The unique invariant measure $ u^* \in\mathcal{P}_p(\mathbb{R}^d) $ to the MV-SDEwCN \eqref{eq1.1} satisfies that for any function $\psi\in C_{lip}(\mathbb{R}^d;\mathbb{R})$, 
	$$
	\begin{aligned}
		\frac{1}{T} \int_0^T \Psi\left(X_t^\varsigma\right) \mathrm{d} t \xrightarrow{\text { a.s. }} \langle u^*,\Psi\rangle \quad \text { as } T \rightarrow \infty .
	\end{aligned}
	$$
\end{cor}

\section{Uniform-in-time conditional propagation
	of chaos}\label{Sect.4}
To further characterize the asymptotic relationship between the IPSwCN and the MV-SDEwCN, this section focuses on the uniform-in-time conditional propagation of chaos. Additionally, we prove that the distribution of a single particle converges to $u^*$ while the distribution of the empirical measure of the IPSwCN converges to $\mu^*$.
For the solution $\{{X}^{i}_t\}_{i\geq 1}$ of \eqref{eq5.2}, as proved in \cite{KNRS2022}, it holds that,for any $i\geq 1$,
\begin{align}\label{eq5.2*}
	\mathbb{P}^0(\mathcal{L}^1({X}^{i}_t)=\mathcal{L}^1({X}^{1}_t), \forall t\geq 0)=1.
\end{align}
Furthermore by the identical distribution of $X^{i,N}_t,i=1,\cdots,N$ and the similar argument as the proof of Lemma \ref{l1}, one obtains that the uniform moment boundedness of $X^{i,N}_t,i=1,\cdots,N$, namely,
\begin{align*}
	\sup_{t\geq 0}\mathbb{E}|X^{i,N}_t|^p\leq \mathbb{E}|\varsigma|^p+K,
	\end{align*}
	 where $K$ is a positive constant independent of $\varsigma$.
Now we establish the uniform-in-time propagation of chaos.
\begin{thm}\label{T2}
	Let {\bf (A1)} and {\bf (A2)} hold with $p>l/2$.
	Then, for any $2\leq q< p $, the solutions $X^{i,N}_t$ to IPSwCN \eqref{eq5.1} and  ${X}^i_t$ to non-IPSwCN \eqref{eq5.2} have the properties
	\begin{align}
		\label{eq5.4}&\max_{1\leq i\leq N}\sup_{t\geq0}\mathbb{E}|{X}^i_t -X^{i,N}_t|^{q}\leq K\varepsilon_{N}, ~~~~~~\sup_{t\geq0}\mathbb{E}{W}_{q}^{q}\big(\mathcal{L}^1({X}^{i}_t), {u_t^N} \big)\leq K\varepsilon_{N},
	\end{align}
	where $u_t^N=\frac{1}{N}\sum_{j=1}^N\delta_{X^{j,N}_t}$, $K$ is a positive constant independent of  $N$  and 
	\begin{align}\label{L**}
		\varepsilon_N= \begin{cases}N^{-1 / 2}+N^{-(p-q) / p} & \text { if } q>d / 2 \text { and } p \neq 2 q, \\
			N^{-1 / 2} \log (1+N)+N^{-(p-q) / p} & \text { if } q=d / 2 \text { and } p \neq 2 q, \\
			N^{-q / d}+N^{-(p-q) / p} & \text { if } q\in(0, d / 2) \text { and } p \neq d /(d-q) .\end{cases}
	\end{align}
\end{thm}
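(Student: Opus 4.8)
The plan is to prove the two estimates in \eqref{eq5.4} by a synchronous coupling argument combined with the quantitative law-of-large-numbers bound for empirical measures. Since $(\varsigma^i,B^i)_{i\ge1}$ are independent copies, the processes $({X}^i_t)_{i\ge 1}$ of the non-IPSwCN \eqref{eq5.2} are conditionally i.i.d.\ given $\mathcal F^0$, with common conditional law $\mathcal L^1({X}^1_t)$ by \eqref{eq5.2*}. First I would introduce the auxiliary empirical measure $\bar u^N_t=\frac1N\sum_{j=1}^N\delta_{{X}^j_t}$ and split
\[
\mathbb E\,{W}_q^q\big(\mathcal L^1({X}^i_t),u^N_t\big)\le 2^{q-1}\Big(\mathbb E\,{W}_q^q\big(\mathcal L^1({X}^i_t),\bar u^N_t\big)+\mathbb E\,{W}_q^q\big(\bar u^N_t,u^N_t\big)\Big),
\]
where the second term is dominated by $\frac1N\sum_j\mathbb E|{X}^j_t-X^{j,N}_t|^q\le\max_i\mathbb E|{X}^i_t-X^{i,N}_t|^q$, so everything reduces to controlling the coupling error $\max_i\sup_{t\ge0}\mathbb E|{X}^i_t-X^{i,N}_t|^q$ and the conditional Monte-Carlo error $\sup_{t\ge 0}\mathbb E\,{W}_q^q(\mathcal L^1({X}^1_t),\bar u^N_t)$.

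For the coupling error I would couple \eqref{eq5.1} and \eqref{eq5.2} with the \emph{same} Brownian motions $B^i,B^0$ and the same initial data $\varsigma^i$. Writing $e^i_t={X}^i_t-X^{i,N}_t$ and applying the It\^o formula to $e^{\lambda t}|e^i_t|^q$ with $\lambda=p(c_3-c_4)/2$ (exactly as in the proof of Lemma \ref{l3.6}, using \textbf{(A2)} to absorb the drift and diffusion differences against $-c_3|e^i_t|^2+c_4{W}_2^2(\mathcal L^1({X}^i_t),\mathcal L^1(X^{i,N}_t))$), the only extra term comes from replacing $\mathcal L^1({X}^i_t)$ by $u^N_t$ inside the coefficients of \eqref{eq5.1}; this is handled by the triangle inequality ${W}_2(\mathcal L^1({X}^i_t),u^N_t)\le {W}_2(\mathcal L^1({X}^i_t),\bar u^N_t)+{W}_2(\bar u^N_t,u^N_t)$ together with Young's inequality to convert the cross term into a small multiple of $\mathbb E|e^i_t|^q$ plus a remainder of order $\mathbb E\,{W}_q^q(\mathcal L^1({X}^1_t),\bar u^N_t)$. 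Averaging over $i$, using exchangeability and the uniform $p$th-moment bound from Theorem \ref{l1}, a Gr\"onwall argument then yields
\[
\max_{1\le i\le N}\sup_{t\ge0}\mathbb E|e^i_t|^q\le K\sup_{t\ge0}\mathbb E\,{W}_q^q\big(\mathcal L^1({X}^1_t),\bar u^N_t\big),
\]
the constant $K$ being independent of $N$ precisely because the dissipativity rate $c_3-c_4>0$ makes the Gr\"onwall factor bounded in time.

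The remaining and main task is the \emph{uniform-in-time} bound $\sup_{t\ge0}\mathbb E\,{W}_q^q(\mathcal L^1({X}^1_t),\bar u^N_t)\le K\varepsilon_N$. Here I would condition on $\mathcal F^0$: given $\mathcal F^0$, the variables ${X}^1_t,\dots,{X}^N_t$ are i.i.d.\ with law $\mathcal L^1({X}^1_t)$, so the Fournier--Guillin quantitative estimate (Theorem 1 in \cite{FG2015}) applies conditionally and gives
\[
\mathbb E^1\,{W}_q^q\big(\mathcal L^1({X}^1_t),\bar u^N_t\big)\le K\,\varepsilon_N\Big(1+{W}_q^{?}\big(\mathcal L^1({X}^1_t),\delta_{\mathbf 0}\big)\cdots\Big),
\]
more precisely $\mathbb E^1{W}_q^q(\mathcal L^1({X}^1_t),\bar u^N_t)\le K\varepsilon_N\big(\int_{\mathbb R^d}|x|^{p}\,\mathcal L^1({X}^1_t)(\mathrm dx)\big)^{q/p}$ after also using a higher moment to get the $N^{-(p-q)/p}$ term; taking $\mathbb E^0$ and applying \eqref{ls1} and Theorem \ref{l1},
\[
\sup_{t\ge0}\mathbb E\,{W}_q^q\big(\mathcal L^1({X}^1_t),\bar u^N_t\big)\le K\varepsilon_N\sup_{t\ge0}\big(\mathbb E|{X}^1_t|^{p}\big)^{q/p}\le K\varepsilon_N.
\]
The delicate point — the \emph{main obstacle} — is that the Fournier--Guillin constant must not depend on $t$; this is guaranteed by the uniform moment bound $\sup_{t\ge0}\mathbb E|{X}^1_t|^p\le\mathbb E|\varsigma|^p+K$ from Theorem \ref{l1}, which is exactly where the dissipativity assumption \textbf{(A2)} with $p>l/2$ pays off. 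Combining the three pieces gives both inequalities in \eqref{eq5.4} with the stated $\varepsilon_N$, and the proof is complete.
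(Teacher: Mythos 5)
Your proposal is correct and follows essentially the same route as the paper: synchronous coupling, the auxiliary empirical measure $\bar u^N_t=\frac1N\sum_j\delta_{X^j_t}$ (the paper's $v^N_t$), the Wasserstein triangle inequality combined with Young's inequality to isolate the conditional Monte--Carlo error, the conditional application of Fournier--Guillin given $\mathcal F^0$ using the conditional i.i.d.\ structure from \eqref{eq5.2*}, the uniform-in-time moment bound of Theorem \ref{l1} to keep the constant $t$-independent, and a Gr\"onwall argument with an exponential weight whose rate is driven by $c_3-c_4>0$. The only cosmetic difference is the intermediate presentation (you state a bound of the form $\max_i\sup_t\mathbb E|e^i_t|^q\le K\sup_t\mathbb E W_q^q(\mathcal L^1(X^1_t),\bar u^N_t)$ before inserting $\varepsilon_N$, whereas the paper substitutes the Fournier--Guillin bound inside the time integral and closes Gr\"onwall with $\lambda=q(c_3-c_4)/8$), but the substance is identical.
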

\begin{proof}
	Fix $2\leq q<p$ and $1\leq i\leq N$. By \eqref{eq5.1} and \eqref{eq5.2}, we obtain that
	\begin{align*}
		\mathrm{d}({X}^{i}_t-X^{i,N}_t)&= \big(f({X}^{i}_t,\mathcal{L}^1({X}^{i}_t))-f(X^{i,N}_t, u_t^N)\big)\mathrm{d} t
		\\&\quad+\big(g({X}^{i}_t,\mathcal{L}^1({X}^{i}_t))-g(X^{i,N}_t, u_t^N)\big)\mathrm{d}B^i_t
		\\&\quad
		+\big(g^0({X}^{i}_t,\mathcal{L}^1({X}^{i}_t))-g^0(X^{i,N}_t, u_t^N)\big)\mathrm{d}B^{0}_t.
	\end{align*}
	Applying the It\^o formula  gives that
	\begin{align*}
		&\mathrm{d}\Big(e^{\lambda t}|{X}^{i}_t-X^{i,N}_t|^q\Big)\leq  e^{\lambda t}\Big(\lambda|{X}^{i}_t-X^{i,N}_t|^q
		\\&+\frac{q}{2}|{X}^{i}_t\!-\!X^{i,N}_t|^{q-2}\Big(2( {X}^{i}_t-X^{i,N}_t)^T(f({X}^{i}_t,\mathcal{L}^1({X}^{i}_t)) -f(X^{i,N}_t, u_t^N))
		\\&+(q\!-\!1)\big(|g({X}^{i}_t,\mathcal{L}^1({X}^{i}_t)) \!-\!g(X^{i,N}_t, u_t^N))|^2
		\!+\!|g^0({X}^{i}_t,\mathcal{L}^1({X}^{i}_t)) \!-\!g^0(X^{i,N}_t, u_t^N))|^2\big)\Big)\Big)\mathrm{d}t
		\\&+qe^{\lambda t}|{X}^{i}_t-X^{i,N}_t|^{q-2}( {X}^{i}_t-X^{i,N}_t)^T(g({X}^{i}_t,\mathcal{L}^1({X}^{i}_t)) -g(X^{i,N}_t, u_t^N)) \mathrm{d}B_t
		\\&+qe^{\lambda t}|{X}^{i}_t-X^{i,N}_t|^{q-2}( {X}^{i}_t-X^{i,N}_t)^T(g^0({X}^{i}_t,\mathcal{L}^1({X}^{i}_t)) -g^0(X^{i,N}_t, u_t^N)) \mathrm{d}B^0_t,
	\end{align*}
	where $\lambda>0$ is to be determined later.
	Taking expectations on both sides and using {\bf (A2)}, we have 
	\begin{align}\label{eq5.6}
		\begin{aligned}
			&e^{\lambda t}\mathbb{E}|{X}^{i}_t\!-\!X^{i,N}_t|^q
			\\&\leq \mathbb{E}\!\int_0^t\!\!\!e^{\lambda s}\Big(\Big(\lambda\!-\!\frac{qc_3}{2} \Big)|{X}^{i}_s\!-\!X^{i,N}_s|^q\!+\!\frac{qc_4}{2}|{X}^{i}_s\!-\!X^{i,N}_s|^{q-2}{W}_2^2(\mathcal{L}^1({X}^{i}_s), u_s^N)\Big)\mathrm{d}s
			.
		\end{aligned}
	\end{align}
	By virtue of the triangle inequality of the Wasserstein distance and the inequality 
	$$2ab\leq \frac{2c_4}{c_3-c_4} a^2+\frac{c_3-c_4}{2c_4}b^2,\forall a,b\in\mathbb{R},$$ it follows that
	\begin{align}\label{eq5.62}
		\begin{aligned}
			&{W}^2_2(\mathcal{L}^1({X}^{i}_s), u_s^N)
			\\&\leq\big({W}_{2}(\mathcal{L}^1({X}^{i}_s), v_s^N)+{W}_{2}( v_s^N, u_s^N)\big)^2
			\\&= {W}^2_{2}(\mathcal{L}^1({X}^{i}_s)), v_s^N)+2{W}_{2}(\mathcal{L}^1({X}^{i}_s), v_s^N){W}_{2}( v_s^N, u_s^N)+{W}^2_{2}( v_s^N, u_s^N)
			\\&\leq \frac{c_3+c_4}{c_3-c_4}{W}^2_{2}(\mathcal{L}^1({X}^{i}_s)), v_s^N)+\frac{c_3+c_4}{2c_4}{W}^2_{2}( v_s^N, u_s^N),
		\end{aligned}
	\end{align}
	where 
	$v_s^N=\frac{1}{N}\sum_{j=1}^N\delta_{{X}^{j}_s}.$
	Inserting \eqref{eq5.62} into \eqref{eq5.6} 
implies that
	\begin{align}\label{eq5.7}
		\begin{aligned}
		&e^{\lambda t}\mathbb{E}|{X}^{i}_t-X^{i,N}_t|^q
		\\&
		\leq \mathbb{E}\int_0^te^{\lambda s}\Big(\Big(\lambda-\frac{qc_3}{2} \Big)|{X}^{i}_s-X^{i,N}_s|^q+\frac{qc_4}{2}|{X}^{i}_s-X^{i,N}_s|^{q-2}
		\\&\quad\times\Big(\frac{c_3+c_4}{c_3-c_4}{W}^2_{2}(\mathcal{L}^1({X}^{i}_s)), v_s^N)+\frac{c_3+c_4}{2c_4}{W}^2_{2}( v_s^N, u_s^N)\Big)
		\\&= \mathbb{E}\int_0^t\!e^{\lambda s}\Big(\Big(\lambda\!-\!\frac{qc_3}{2} \Big)|{X}^{i}_s\!-\!X^{i,N}_s|^q
	\\&\quad+\!\frac{qc_4(c_3\!+\!c_4)}{2(c_3\!-\!c_4)}|{X}^{i}_s\!-\!X^{i,N}_s|^{q-2}{W}_{2}^2(\mathcal{L}^1({X}^{i}_s), v_s^N)
		\\&\quad+\frac{q(c_3+c_4)}{4}|{X}^{i}_s-X^{i,N}_s|^{q-2}{W}_{2}^2( v_s^N, u_s^N)\Big)\mathrm{d}s
		.
		\end{aligned}
	\end{align} 
	By the Young inequality and the properties of the Wasserstein distance, we obtain that
	\begin{align}\label{eq5.8}
		\begin{aligned}
		&\frac{qc_4(c_3+c_4)}{2(c_3-c_4)}|{X}^{i}_s-X^{i,N}_s|^{q-2}{W}_{2}^2(\mathcal{L}^1({X}^{i}_s), v_s^N)
		\\&\leq\frac{q(c_3-c_4)}{8}|{X}^{i}_s-X^{i,N}_s|^{q}+K{W}_{q}^q(\mathcal{L}^1({X}^{i}_s), v_s^N),
		\end{aligned}
	\end{align}
	and 
	\begin{align}\label{eq5.9}
		&\frac{q(c_3+c_4)}{4}|{X}^{i}_s-X^{i,N}_s|^{q-2}{W}_{2}^2( v_s^N, u_s^N)
		\nonumber\\&\leq \frac{(q-2)(c_3+c_4)}{4}|{X}^{i}_s-X^{i,N}_s|^{q}+\frac{(c_3+c_4)}{2}{W}_{q}^q( v_s^N, u_s^N).
	\end{align}
	Substituting \eqref{eq5.8} and \eqref{eq5.9} into \eqref{eq5.7} and applying the Fubini theorem, we deduce that 
	\begin{align}\label{eq5.10}
		e^{\lambda t}\mathbb{E}|{X}^{i}_t-X^{i,N}_t|^q
		&\leq \int_0^te^{\lambda s}\Big(\Big(\lambda-\frac{(q+4)c_3-(q-4)c_4}{8} \Big)\mathbb{E}|{X}^{i}_s-X^{i,N}_s|^q
		\nonumber\\&\quad+K\mathbb{E}{W}_{q}^q(\mathcal{L}^1({X}^{i}_s), v_s^N)
		+\frac{(c_3+c_4)}{2}\mathbb{E}{W}_{q}^q( v_s^N, u_s^N)\Big)\mathrm{d}s.
	\end{align}
	We now analyse the term $\mathbb{E}{W}_{q}^q(\mathcal{L}^1({X}^{i}_s), v_s^N)$ in the right side of \eqref{eq5.10}. It can be easily deduce from \eqref{eq5.2*} that there exists a set $\tilde{\Omega}^0$ satisfying $\mathbb{P}^0(\tilde{\Omega}^0)=1$ such that for any $\omega^0\in\tilde{\Omega}^0$,
	$$\mathcal{L}^1({X}^{i}_s)(\omega^0)=\mathcal{L}^1({X}^{1}_s)(\omega^0),\forall s\geq 0.$$
	Namely, for any $\omega^0\in\tilde{\Omega}^0$ and $t\geq 0$, $ {X}^{i}_s(\omega^0,\cdot),   i=1,\cdots,   N, $ are independent and identically distributed random variables with distribution $\mathcal{L}^1({X}^{1}_s)(\omega^0)$. Using the Fubini theorem and the H\"older inequality, one deduces from  Theorem 1 in \cite{FG2015} that for any $s\geq 0$ and any $\omega^0\in\tilde{\Omega}^0$
		\begin{align*}
			\mathbb{E}^1{W}_{q}^{q}\big(\mathcal{L}^1({X}^{i}_s), v_s^N\big) 
			&=\mathbb{E}^1 {W}_{q}^{q}\Big(\mathcal{L}^1({X}^{i}_s)(\omega^0),\frac{1}{N}\sum_{j=1}^N\delta_{{X}^{j}_s(\omega^0,\cdot)}\Big)
			\\&\leq K\varepsilon_{N} \mathbb{E}^1{W}_{p}^{q}(\mathcal{L}^1({X}^{i}_s)(\omega^0),\delta_{\bf 0}),
	\end{align*}
	where $K$ is a positive constant depending only on $d,p,q$, and $\varepsilon_{N}$ is defined by \eqref{L**}. 
	Taking expectations on both sides of the above inequality and using Lemma \ref{l1}, we have 
	\begin{align}\label{eq5.11}
		\mathbb{E}{W}_{q}^{q}\big(\mathcal{L}^1({X}^{i}_s), v_s^N\big) 
		\leq  K\varepsilon_{N} \mathbb{E} {W}_{p}^{q}(\mathcal{L}^1({X}^{i}_s),\delta_{\bf 0})\leq K\varepsilon_{N}(\mathbb{E}|{X}^1_s|^p)^{\frac{q}{p}}. 
	\end{align}
	{As discussed in the proof of Theorem 2.12 in \cite{CD2018b}, $(X_s^j,X_s^{j,N}),  j=1,\cdots,  N,$ are identically
		distributed. Noticing that $\frac{1}{N}\sum_{i=1}^{N}\delta_{({X}^{j}_s,{X}^{j,N}_s)}$ is a coupling of $v_s^N$ and $u_s^N$,  we obtain   
		\begin{align}\label{eq5.12}
			\mathbb{E}{W}_{q}^{q}( v_s^N, u_s^N)&
			\leq\mathbb{E}\Big[\frac{1}{N}\sum_{j=1}^N|{X}^{j}_s-{X}^{j,N}_s|^{q}\Big]
			=\mathbb{E}|{X}^{i}_s-{X}^{i,N}_s|^{q}.
	\end{align}}
	Inserting \eqref{eq5.11} and \eqref{eq5.12} into \eqref{eq5.10} yields that
	\begin{align*}
		e^{\lambda t}\mathbb{E}|{X}^{i}_t-X^{i,N}_t|^q
		&\leq \int_0^te^{\lambda s}\Big(K \varepsilon_{N} +\Big(\lambda-\frac{q(c_3-c_4)}{8} \Big)\mathbb{E}|{X}^{i}_s-X^{i,N}_s|^q\Big)\mathrm{d}s.
	\end{align*}
	Let $\lambda={q(c_3-c_4)}/{8} >0$.  
	Dividing $e^{\lambda t}$ on both sides of the above inequality yields that
	\begin{align}\label{eq5.14}
		\mathbb{E}|{X}^{i}_t-X^{i,N}_t|^q
		&\leq K\varepsilon_{N},~~~~\forall~t\geq 0, 
	\end{align}
	which establishes the desired assertion \eqref{eq5.4}. Furthermore, combining \eqref{eq5.11}-\eqref{eq5.14} leads to
	\begin{align*}
		\mathbb{E}{W}_{q}^{q}(\mathcal{L}^1({X}^{i}_t), u_t^N)
		&\leq \mathbb{E}\big({W}_{q}(\mathcal{L}^1({X}^{i}_t), v_t^N)+{W}_{q}( v_t^N, u_t^N)\big)^q
		\nonumber\\&\leq 2^{q-1}\mathbb{E}{W}_{q}^q(\mathcal{L}^1({X}^{i}_t), v_t^N)+2^{q-1}\mathbb{E}{W}_{q}^{q}( v_t^N, u_t^N) \leq K\varepsilon_{N}.
	\end{align*}
	Thus,  the other  desired assertion follows.  
\end{proof}

Next, we investigate the asymptotic convergence of the particle system and its empirical measure toward the unique invariant measure.

\begin{thm}
	Let  \textbf{(A1)} and \textbf{(A2)} hold with $p> (l/2)\vee 2 $. Then,  for any $2\leq q< p $,
	the solution $X^{i,N}_t$ and its empirical measure $u_t^N$ to IPSwCN \eqref{eq5.1} with initial data $(X^{1,N}_0, \cdots,X^{N,N}_0)= (\varsigma^1,\cdots, \varsigma^N)$  have the properties 
	\begin{align}
		\label{eq5.16}\max_{1\leq i\leq N}\lim_{t\rightarrow\infty}{W}_{q}^{q}\big(\mathcal{L}({X}^{i,N}_t),u^*\big)&\leq K\varepsilon_{N},
		~~~~~\lim_{t\rightarrow\infty}\mathbb{W}_{q}^{q}\big(\mathcal{L}( u_t^N),\mu^*\big)\leq K\varepsilon_{N},
	\end{align}
	where   $K>0$ is a constant independent of $N$. 
\end{thm}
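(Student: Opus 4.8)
The plan is to interpolate, through the non-interacting particle $X^i_t$ of system \eqref{eq5.2}, between the interacting particle $X^{i,N}_t$ and the invariant law $u^*$ (and, in parallel, between the empirical measure $u_t^N$ and $\mu^*$), thereby combining the uniform-in-time propagation of chaos of Theorem \ref{T2} with the exponential ergodicity established in Section \ref{Sect.3}. The key point is that the chaos estimates of Theorem \ref{T2} are uniform in $t\geq0$, so letting $t\to\infty$ only activates the (vanishing) ergodic error of the non-interacting particle while the $O(\varepsilon_N)$ chaos error survives unchanged.

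First I would identify the limiting object of $X^i_t$. After relabelling, the non-interacting system \eqref{eq5.2} is nothing but the MV-SDEwCN \eqref{eq1.1} started from $\varsigma^i$, so by the weak uniqueness of Theorem \ref{wu} one has $(\mathcal{L}(X^i_t),\mathcal{L}(\mathcal{L}^1(X^i_t)))=\Phi_t(u^i_0,\mu^i_0)$ with $(u^i_0,\mu^i_0)=(\mathcal{L}(\varsigma^i),\mathcal{L}(\mathcal{L}^1(\varsigma^i)))\in\mathcal{P}_p(\mathbb{R}^d)\times\mathcal{P}_p(\mathcal{P}_p(\mathbb{R}^d))$. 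Theorem \ref{Th4.1} (equivalently Corollary \ref{th3.2}) then yields
$$W_p(\mathcal{L}(X^i_t),u^*)+\mathbb{W}_p(\mathcal{L}(\mathcal{L}^1(X^i_t)),\mu^*)\leq \boldsymbol{d}\big((u^i_0,\mu^i_0),(u^*,\mu^*)\big)\,e^{-\frac{c_3-c_4}{2}t}\longrightarrow 0,\qquad t\to\infty.$$
Since $q<p$, the elementary comparisons $W_q\leq W_p$ and $\mathbb{W}_q\leq\mathbb{W}_p$ (Hölder's inequality applied to an optimal $W_p$-, resp.\ $\mathbb{W}_p$-coupling) give $W_q(\mathcal{L}(X^i_t),u^*)\to0$ and $\mathbb{W}_q(\mathcal{L}(\mathcal{L}^1(X^i_t)),\mu^*)\to0$ as well.

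Next I would bring in Theorem \ref{T2} for the uniform-in-$t$ bounds: $W_q^q(\mathcal{L}(X^{i,N}_t),\mathcal{L}(X^i_t))\leq\mathbb{E}|X^i_t-X^{i,N}_t|^q\leq K\varepsilon_N$, and, using the coupling of $\mathcal{L}^1(X^i_t)$ and $u_t^N$ carried on the common probability space exactly as in \eqref{ls1}, $\mathbb{W}_q^q(\mathcal{L}(\mathcal{L}^1(X^i_t)),\mathcal{L}(u_t^N))\leq\mathbb{E}\big[W_q^q(\mathcal{L}^1(X^i_t),u_t^N)\big]\leq K\varepsilon_N$. Then the triangle inequality for $W_q$ gives $W_q(\mathcal{L}(X^{i,N}_t),u^*)\leq W_q(\mathcal{L}(X^{i,N}_t),\mathcal{L}(X^i_t))+W_q(\mathcal{L}(X^i_t),u^*)$; raising to the $q$-th power, taking $\limsup$ as $t\to\infty$, and using that the second term vanishes, I obtain $\limsup_{t\to\infty}W_q^q(\mathcal{L}(X^{i,N}_t),u^*)\leq K\varepsilon_N$, uniformly in $i$ because the constant in Theorem \ref{T2} does not depend on $i$. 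The second assertion of \eqref{eq5.16} follows in precisely the same way, with $\mathbb{W}_q$, $u_t^N$, $\mathcal{L}^1(X^i_t)$, $\mu^*$ replacing $W_q$, $X^{i,N}_t$, $X^i_t$, $u^*$.

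I expect the only genuinely delicate point to be the identification in the second paragraph: one has to make sure the non-interacting system \eqref{eq5.2} really falls under the operator-semigroup framework of Section \ref{Sect.3}, so that the contraction of Lemma \ref{l3.6} and the convergence rate of Theorem \ref{Th4.1} apply directly to $\mathcal{L}(X^i_t)$; the hypothesis $p>l/2$ is precisely what makes Section \ref{Sect.3} available. Everything else is a routine assembly of the triangle inequality for $W_q$ and $\mathbb{W}_q$, the comparison $W_q\leq W_p$, and the already-established uniform-in-$N$ estimates of Theorem \ref{T2}.
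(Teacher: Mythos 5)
Your proof is correct and follows essentially the same route as the paper: interpolate through the non-interacting particle $X^i_t$, combine the uniform-in-time propagation of chaos from Theorem \ref{T2} with the exponential convergence of $(\mathcal{L}(X^i_t),\mathcal{L}(\mathcal{L}^1(X^i_t)))$ to $(u^*,\mu^*)$, and let $t\to\infty$. The only cosmetic difference is that the paper introduces an explicit stationary copy $\bar{X}^i_t$ started from $\xi^i$ with $\mathcal{L}(\xi^i)=u^*$ and bounds $\mathbb{E}|X^i_t-\bar{X}^i_t|^q$ via the pathwise contraction \eqref{eq18}, whereas you invoke Theorem \ref{Th4.1} (equivalently Corollary \ref{th3.2}) directly for $W_q(\mathcal{L}(X^i_t),u^*)$; these are equivalent, since the corollary is itself a consequence of \eqref{eq18}.
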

\begin{proof}~~For any $1\leq i\leq N$, choose a random variable $\xi^i\in L(\Omega; \mathbb{R}^d)$ such that $\mathcal{L}(\xi^i)=u^*$ and $\mathcal{L}(\mathcal{L}^1(\xi^i))=\mu^*$. Let $\bar{X}^{i}_t$ be the solution to non-IPSwCN \eqref{eq5.2} with the initial value $\bar{X}^{i}_0=\xi^i$. Fix $2\leq q< p $. Since $(u^*,\mu^*)$ is the invariant measure couple of \eqref{eq5.2}, then we have for any $t\geq 0$,
	$${W}_q^q(\mathcal{L}(\bar{X}^{i}_t),u^*)\leq {W}_p^q(\mathcal{L}(\bar{X}^{i}_t),u^*)\leq d^q((\mathcal{L}(\bar{X}^{i}_t),\mathcal{L}(\mathcal{L}^1(\bar{X}^{i}_t))),(u^*,\mu^*))=0.$$ This together with Theorem \ref{T2} and \eqref{eq18} implies that 
	\begin{align*}
		\begin{aligned}
			{W}_q^q(\mathcal{L}(X^{i,N}_t),u^*)
			&\leq 3^{q-1}{W}_q^q(\mathcal{L}(X^{i,N}_t),\mathcal{L}(X^{i}_t))
			+3^{q-1}{W}_q^q(\mathcal{L}(X^{i}_t),\mathcal{L}(\bar{X}^{i}_t))
			\\&\quad+3^{q-1}{W}_q^q(\mathcal{L}(\bar{X}^{i}_t),u^*)
			\\&\leq 3^{q-1}\mathbb{E}|X_t^i-X_t^{i,N}|^q+3^{q-1}\mathbb{E}|X_t^i-\bar{X}_t^{i}|^q
			\\&\leq   K\varepsilon_{N}+3^{q-1}\mathbb{E}|\xi^i-\varsigma^i|^qe^{-\frac{q (c_3-c_4)t}{2}}.
		\end{aligned}
	\end{align*} 
	Letting $t\rightarrow\infty$ on both sides of the above inequality, we obtain the first inequality in \eqref{eq5.16} holds. Now we turn to the other one of \eqref{eq5.16}. Since $ \mu^* $ is the invariant measure  of the conditional distribution to \eqref{eq5.2},  we have for any $t\geq 0$,
	$$\mathbb{W}_{q}^{q}(\mathcal{L}(\mathcal{L}^1(\bar{X}^{i}_t)),\mu^*)\leq \mathbb{W}_{p}^{q}(\mathcal{L}(\mathcal{L}^1(\bar{X}^{i}_t)),\mu^*)\leq d^q((\mathcal{L}(\bar{X}^{i}_t),\mathcal{L}(\mathcal{L}^1(\bar{X}^{i}_t))),(u^*,\mu^*))=0.$$
	This together with \eqref{ls1} and \eqref{eq5.4} shows that
	\begin{align}\label{eq5.24}
		\begin{aligned}
			\mathbb{W}_{q}^{q}\big(\mathcal{L}( u_t^N),\mu^*\big)&\leq 3^{q-1}\mathbb{W}_{q}^{q}\big(\mathcal{L}( u_t^N),\mathcal{L}(\mathcal{L}^1(X_t^{i})) \big)
			\\
			&~~~+3^{q-1}\mathbb{W}_{q}^{q}\big(\mathcal{L}(\mathcal{L}^1(X_t^{i})),\mathcal{L}(\mathcal{L}^1(\bar{X}_t^{i}))\big)
			\\
			&~~~+3^{q-1}\mathbb{W}_{q}^{q}\big(\mathcal{L}(\mathcal{L}^1(\bar{{X}}_t^{i})),\mu^*\big)
			\\&\leq K\varepsilon_{N}+3^{q-1}\mathbb{E}|X_t^i-\bar{X}_t^i|^q.
	\end{aligned}\end{align}
	Using \eqref{eq18}, then letting $t\rightarrow\infty$ on both sides of \eqref{eq5.24} derives that
	\begin{align*}
		\lim_{t\rightarrow\infty}\mathbb{W}_{q}^{q}\big(\mathcal{L}( u_t^N),\mu^*\big)  
		\leq K\varepsilon_{N}+ 3^{q-1}\lim_{t\rightarrow\infty}\mathbb{E}|\xi^i-\varsigma^i|^qe^{-\frac{q (c_3-c_4)t}{2}}=K\varepsilon_{N}.
	\end{align*}
	The second inequality of \eqref{eq5.16} follows. The proof is complete.
\end{proof}
\section{Examples}\label{Sect.5}
We now illustrate our main results with two examples in different dimensions.
\begin{expl}
	\rm 
	Consider a scalar non-linear MV-SDEwCN
	\begin{align}\label{eq6.8}
		\begin{aligned}
			{\rm d}X_t&=(-X^3_t+(1-\beta_2)X_t+\beta_2\beta_3\mathbb{E}^1(X_t))\mathrm{d}t 
			+\sigma \mathrm{d}B_t+\beta_1 X_t\mathrm{d}B_t^0,
	\end{aligned}\end{align}
	where $\beta_1,\beta_2,\beta_3$ are constants.
\begin{figure}[htbp]\label{f1}
		\centering
		\includegraphics[height=6cm,width=13cm]{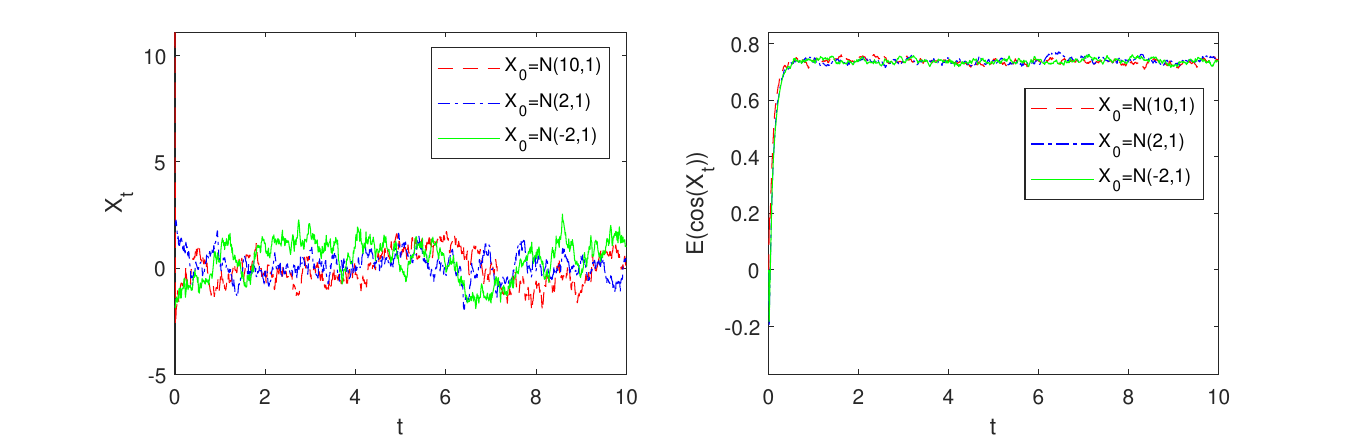}
		\setlength{\abovecaptionskip}{-0.03cm}
		\caption{Three sample paths of $X_t$ and the sample mean of  $\cos(X_t)$ with initial distributions $X_0\sim N(10,1),~N(2,1),~N(-2,1)$ of the system \eqref{eq6.8} for $t\in[0,10]$    with 1200 sample points.}
	\end{figure} When $\beta_1=0,\beta_3=1$, equation \eqref{eq6.8} is called the nonlinear anharmonic oscillator, refer to \cite{D1983}.  Comparing with \eqref{eq1.1}, we know
	$$
	f(x,u)=-x^3+(1-\beta_2)x+\beta_2\beta_3\int_{\mathbb{R}^d}zu(\mathrm{d}z),\quad g(x,u)=\sigma,\quad g^0(x,u)=\beta_1x. 
	$$ One notices that {\bf(A1)} holds with $c_1=3\big(1\vee(1-\beta_2)^2\vee(\beta_2\beta_3)^2\big), c_2=\sigma^2\vee\beta_1^2,l=6$. 
	We also have that 
	for any $x,y\in \mathbb{R}^d, u,v\in \mathcal{P}_2(\mathbb{R}^d)$, $\pi\in\Pi(u,v)$,
	\begin{align*}
		&(x-y)^T(f(x,u)-f(y,v))\nn\\
		&
		=(x-y)^T\left(-(x^3-y^3)+(1-\beta_2)(x-y)+\beta_2\beta_3\left(\int_{\mathbb{R}^d}zu(\mathrm{d}z)-\int_{\mathbb{R}^d}z^{\prime}v(\mathrm{d}z^{\prime})\right)\right)
		\nn\\&
		\leq (1-\beta_2)|x-y|^2+\beta_2\beta_3|x-y|\int_{\mathbb{R}^d}|z-z^{\prime}|\pi(\mathrm{d}z,\mathrm{d}z^{\prime})
		\nn\\&\leq(1-\beta_2+\frac{\beta_2\beta_3}{2})|x-y|^2+\frac{\beta_2\beta_3}{2}\int_{\mathbb{R}^d}|z-z^{\prime}|^2\pi(\mathrm{d}z,\mathrm{d}z^{\prime})
	\end{align*}
	Therefore, we have 
	\begin{align*}
		&(x-y)^T(f(x,u)-f(y,v))\leq(1-\beta_2+\frac{\beta_2\beta_3}{2})|x-y|^2+\frac{\beta_2\beta_3}{2}W_2^2(u,v).
	\end{align*}
	We compute that for any $\pi\in\Pi(u,v)$,
	\begin{align*}
		&|g(x,u)-g(y,v)|^2+|g^0(x,u)-g^0(y,v)|^2\leq \beta_1|x-y|^2 .
	\end{align*}
	Then we obtain that 
	\begin{align*}
		&2(x-y)^T(f(x,u)-f(y,v))+5|g(x,u)-g(y,v)|^2+5|g^0(x,u)-g^0(y,v)|^2\\
		&\leq \big(2(1-\beta_2)+5 \beta_1 ^2+\beta_2\beta_3\big)|x-y|^2+\beta_2\beta_3W_2^2(u,v).
	\end{align*}
	Then, if $2(1-\beta_2)+5 \beta_1^2+2\beta_2\beta_3<0$,  one observes that {\bf(A2)} holds with $c_3=-2(1-\beta_2)-5|\beta_1|^2-\beta_2\beta_3$, $c_4=\beta_2\beta_3$. Thus, by virtue of Theorem \ref{Th4.1},  MV-SDEwCN \eqref{eq6.8} has a unique invariant measure $(u^*,\mu^*)$. In particular, for $\beta_1=\frac{1}{2},\beta_2=3,\beta_3=\frac{1}{4},\sigma=2$, MV-SDEwCN \eqref{eq6.8} satisfies the conditions. 
Figure 1 depicts three sample paths of $X_t$ and the sample mean of $\cos(X_t)$ with initial data $X_0\sim N(10,1),N(2,1),N(-2,1)$ for $t\in[0,10]$ with 1200 sample points. Intuitively, although the solution processes start  from different initial data, they all  exhibit stationary characters in the sense of measure. Figure 2 depicts the empirical density of the solution process to MV-SDEwCN \eqref{eq6.8} with  $X_0\sim N(10,1)$ at time $t=20, 21, 22.5, 24, 24.9, 25$ with 1200 sample points. From the figure 2 one notices that the probability density of the solution process approaches to a steady state as time evolves.
\begin{figure}[htbp]\label{f11}
		\centering
		\includegraphics[height=6cm,width=12cm]{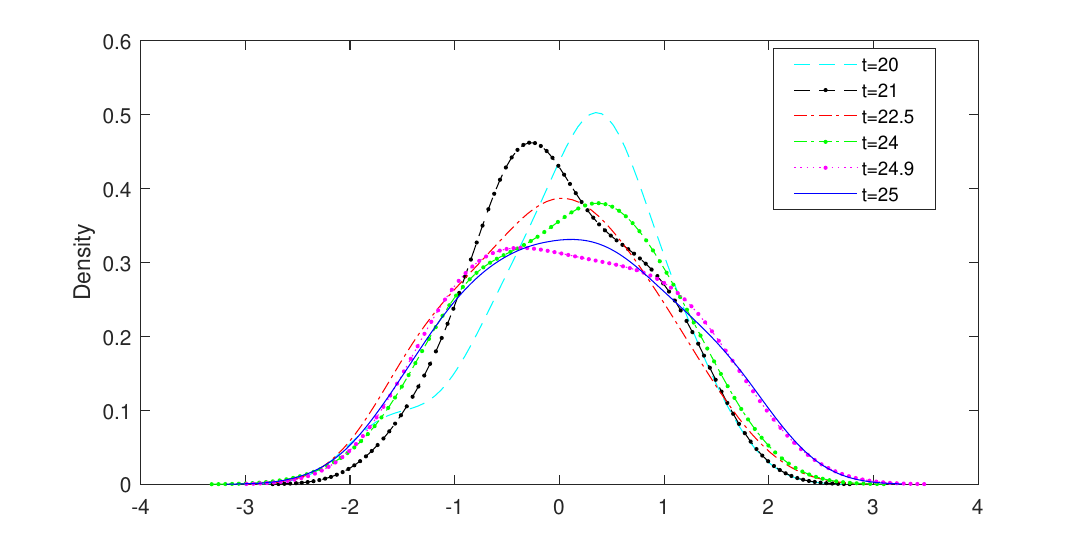}
		\setlength{\abovecaptionskip}{-0.03cm}
		\caption{The empirical density of the solution process to \eqref{eq6.8} with initial value $X_0\sim N(10,1)$ at time $t=20,21,22.5,24,24.9,25$ with 1200 sample points.}
	\end{figure}
\end{expl}
\begin{expl}
	\rm	Consider a two-dimensional MV-SDEswCN \eqref{eq1.1} with
	\begin{align*}
		&f(x, u)=\binom{-2 x_1-x_1^3+\frac{1}{2}\int x_1 u(\mathrm{d} x)}{-2 x_2+x_1-x_2^3+\frac{1}{2}\int x_2 u(\mathrm{d} x)}, \quad g(x, u)=\frac{1}{2}\left(\begin{array}{cc}
			\left(1-x_1\right) & 0 \\
			0 & \left(1-x_2\right)
		\end{array}\right),\end{align*} and
	\begin{align*}
		g^0(x, u)=\frac{1}{4}\left(\begin{array}{cc}
			\left(-x_1+\int x_1 u(\mathrm{d} x)\right) & 0 \\
			0 & \left(-x_2+\int x_2 u(\mathrm{d} x)\right)
		\end{array}\right),
	\end{align*}
	where $x=\left(x_1, x_2\right) \in \mathbb{R}^2$. It is easy to see that for any $x\in\mathbb{R}^2$, $u\in\mathcal{P}_2(\mathbb{R}^2)$ and $\pi\in\Pi(u,\delta_{\bf 0})$,
	$$
	|f(x, u)|^2
	\leq 16|x|^2+4|x|^6+\int |x|^2 \pi(\mathrm{d} x,\mathrm{d}y).
	$$
	The arbitrary of the coupling $\pi\in\Pi(u,\delta_{\bf 0})$ shows that 
	$
	|f(x, u)|^2\leq 16(|x|^2+|x|^6+W_2^2(u,\delta_{\bf 0})).
	$
	Similarly, we compute that
	$|g(x, u)|^2+|g^0(x, u)|^2\leq 1+x^2+W_2^2(u,\delta_{\bf 0})$.
	Thus {\bf(A1)} holds with $l=6,c_1=16,c_2=1.$
	A direct computation implies that for any $x,y\in \mathbb{R}^2$, $u,v\in\mathcal{P}_2(\mathbb{R}^d)$ and $\pi\in\Pi(u,v)$
	\begin{align*}
		&2\langle x-y, f(x,u)-f(y,v)\rangle +3(|g(x,u)-g(y,v)|^2+|g^0(x,u)-g^0(y,v)|^2)
		\\&\leq 
		-\frac{11}{8}|x-y|^2+\frac{5}{8}W_2^2(u,v).
	\end{align*}
	This implies that {\bf(A2)} holds with $p=4, c_3={11}/{8}$ and $c_4={5}/{8}$. By Theorem \ref{Th4.1}, MV-SDEswCN \eqref{eq1.1} has a unique invariant measure $(u^*,\mu^*)$. Figure 3 depicts the empirical density of the solution process to MV-SDEwCN \eqref{eq1.1} with  $X_0\sim N(10,1)$ at time $t=10$ with 500 sample points.
\begin{figure}[htbp]\label{f2}
		\centering
		\includegraphics[height=8cm,width=14cm]{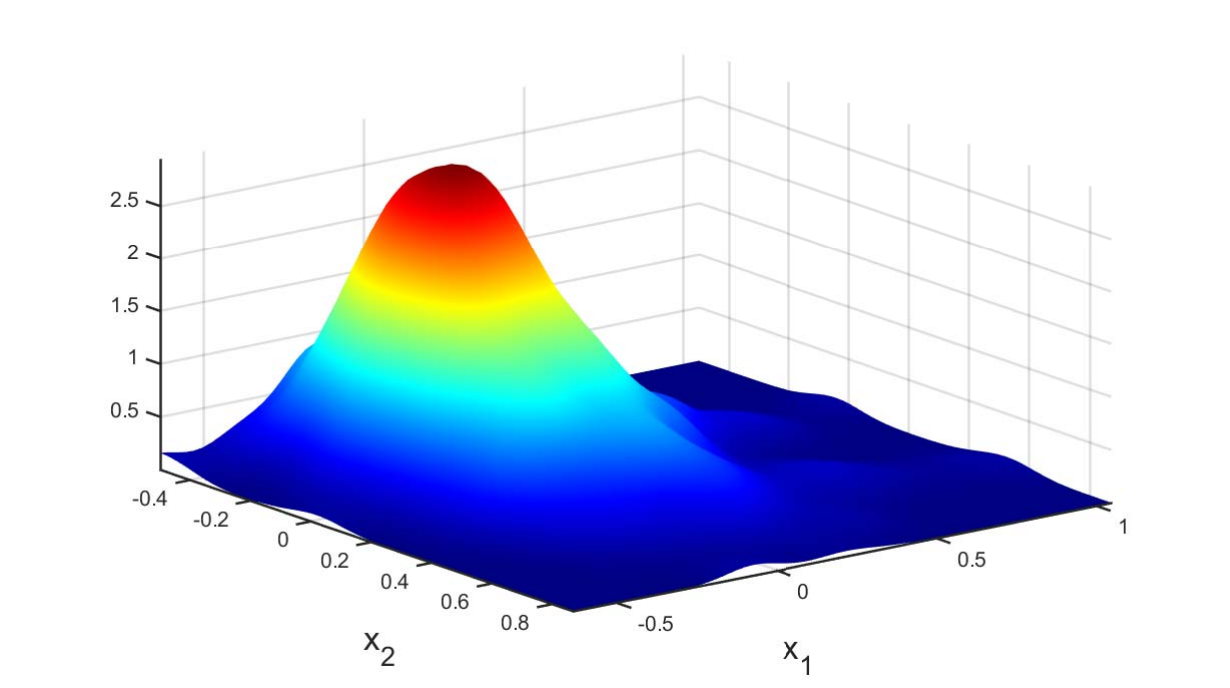}
		\setlength{\abovecaptionskip}{-0.03cm}
		\caption{The empirical density of the solution process to the system \eqref{eq1.1} with $X_0\sim N(10,1)$ at time $t=10$ with 500 sample points.}
	\end{figure}
\end{expl}
\section*{Acknowledgements}

The authors would like to thank the National Natural Science Foundation of China (No. 12371402)  
and the Tianjin Natural Science Foundation (24JCZDJC00830)
 for their financial support.

\bigskip
\noindent

\end{document}